\title{Some remarks on Nil groups in algebraic K-theory}
\author{James F. Davis}
\thanks{Partially supported by a grant from the National Science Foundation}
\address{Department of Mathematics \\
Indiana University \\
Bloomington, IN 47405 \\ USA} \email{jfdavis@indiana.edu}
\date{}
\newcommand{\cB}{{\mathcal{B}}}
\newcommand{\cC}{{\mathcal{C}}}
\newcommand{\cF}{{\mathcal{F}}}
\newcommand{\cG}{{\mathcal{G}}}
\newcommand{\cH}{{\mathcal{H}}}
\newcommand{\cM}{{\mathcal{M}}}
\newcommand{\cN}{{\mathcal{N}}}
\newcommand{\bfE}{{\mathbf E}}
\newcommand{\bfK}{{\mathbf K}}
\newcommand{\bfN}{{\mathbf N}}
\newcommand{\bfX}{{\mathbf X}}
\newcommand{\bfY}{{\mathbf Y}}
\newcommand{\bfZ}{{\mathbf Z}}
\newcommand{\bfnil}{{\mathbf {Nil}}}
\newcommand{\da}{{\downarrow}}
\newcommand{\Q}{{\mathbb Q}}
\newcommand{\R}{{\mathbb R}}
\newcommand{\Z}{{\mathbb Z}}
\newcommand{\g}{{\Gamma}}
\newcommand{\go}{{\Gamma_0}}
\DeclareMathOperator{\hocolim}{{hocolim}}
\DeclareMathOperator{\id}{Id}
\DeclareMathOperator{\map}{map}
\DeclareMathOperator{\mor}{mor}
\DeclareMathOperator{\nil}{Nil}
\DeclareMathOperator{\op}{op}
\DeclareMathOperator{\Or}{Or}
\DeclareMathOperator{\pt}{pt}
\DeclareMathOperator{\fin}{fin}
\DeclareMathOperator{\cfin}{cfin}
\DeclareMathOperator{\csub}{CSub}
\DeclareMathOperator{\vc}{vc}
\DeclareMathOperator{\all}{all}
\DeclareMathOperator{\ab}{Abelian~Groups}
\DeclareMathOperator{\rings}{Rings}
\DeclareMathOperator{\spaces}{Top}
\DeclareMathOperator{\spectra}{Spectra}
\DeclareMathOperator{\initial}{initial}
\newtheorem{theorem}{Theorem}
\newtheorem*{theorem1}{Theorem 1}
\newtheorem*{theorem5}{Theorem 5}
\newtheorem{corollary}[theorem]{Corollary}
\newtheorem{conjecture}[theorem]{Conjecture}
\newtheorem{lemma}[theorem]{Lemma}
\theoremstyle{definition}
\newtheorem{remark}[theorem]{Remark}
\numberwithin{equation}{theorem}
\begin{document}
\begin{abstract}
This note explains consequences of recent work of Frank Quinn for computations of Nil groups in algebraic K-theory, in particular the Nil groups occurring in the K-theory of polynomial rings, Laurent polynomial rings, and the group ring of the infinite dihedral group.
\end{abstract}
\maketitle

\section{Statement of Results}
Let $R$ be a ring with unit.  For an integer $q$, let $K_qR$ be the algebraic $K$-group of Bass and Quillen.  Bass defines the NK-groups 
$$
NK_q(R) = \ker(K_qR[t] \to K_qR)
$$
where the map on $K$-groups is induced by the ring map 
$$R[t] \to R, \quad f(t) \mapsto f(0).
$$
The $NK$-groups are often called Nil-groups because they are related to nilpotent endomorphisms of projective $R$-modules.

Let $G$ be a group.  Let $\Or G$ be its the orbit category; objects are $G$-sets $G/H$ where $H$ is a subgroup of $G$ and morphisms are $G$-maps.   Davis-L\"uck \cite{DL98} define a functor $\bfK : \Or G \to \spectra$  with the key property  $\pi_q \bfK(G/H) = K_q(RH)$.  The utility of such a functor is to allow the definition of an equivariant homology theory, indeed for a $G$-CW-complex $X$, one defines
$$
H^G_q(X; \bfK) = \pi_q(\map_G(-,X)_+ \wedge_{\Or G} \bfK(-))
$$
(see \cite[section 4 and 7]{DL98} for basic properties).  Note that $\map_G(G/H,X) = X^H$ is the fixed point functor and that the ``coefficients'' of the homology theory are given by $H^G_q(G/H; \bfK) = K_q(RH)$.

A {\em family $\cF$ of subgroups of $G$} is a nonempty set of subgroups closed under subgroups and conjugation.  For such a family, $E_{\cF}$ (short for $E_{\cF}G$)  is the classifying space for $G$-actions with isotopy in $\cF$.  It is characterized up to $G$-homotopy type as a $G$-CW-complex so that $E_{\cF}^H$ is contractible for subgroups $H \in \cF$ and is empty for subgroups $H \not \in \cF$.

Consider the following families of subgroups of $G$:  
$$
1 \subset \fin \subset \cfin \subset \vc \subset \all
$$
Here 
\begin{align*}
\fin & = \{\text{finite subgroups} \} \\
\cfin & = \{\text{cyclic subgroups} \} \cup \{\text{finite subgroups} \}  \\
\vc & = \{\text{virtually cyclic subgroups} \}
\end{align*}
Note that a model for $E_{\all}G$ is $ G/G = \pt$, so $H^G_q(E_{\all};\bfK) = K_q(RG)$.  Note also $E_1G = EG$ by definition.

The Farrell-Jones isomorphism conjecture in $K$-theory \cite{FJ93}, as reinterpreted in \cite{DL98}, states that $H^G_*(E_{\all},E_{\vc};\bfK) = 0$.  (Here and elsewhere, given a map $f: A \to B$, we write $H_*(B,A)$ as a shorthand notation for $H_*(M(f),A)$ where $M(f)$ is the mapping cylinder.)

\begin{theorem}  \label{Zn}
 Let $\cM$ be the set of maximal cyclic subgroups of $\Z^n$.
$$
H^{\Z^n}_q(E_{\vc}, E_{1};\bfK)  
 \cong  \bigoplus_{C \in \cM}  ~ \bigoplus_{n-1 \geq i \geq 0} 2\binom{n-1}{i} NK_{q-i}R.
$$
\end{theorem}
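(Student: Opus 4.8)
The plan is to reduce, using the decomposition of $H^{G}_*(E_{\vc},E_{\fin};\bfK)$ into local terms indexed by conjugacy classes of maximal virtually cyclic subgroups (from Quinn's work, or equivalently from the L\"uck--Weiermann construction), to a separate computation for each maximal cyclic subgroup $C$ of $\Z^n$, and then to evaluate that computation by a K\"unneth-type splitting together with the classical Bass--Heller--Swan theorem. First I would record the elementary structure of $G=\Z^n$: it is torsion-free, so $E_1 G = E_{\fin}G$; its nontrivial virtually cyclic subgroups are exactly the infinite cyclic ones; two infinite cyclic subgroups are commensurable if and only if they span the same rational line, so the commensurability classes correspond bijectively to $\cM$; and, since $G$ is abelian, the commensurator of each $C\in\cM$ is all of $G$. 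Substituting this data into the L\"uck--Weiermann pushout square for the pair $\fin\subset\vc$ with the commensurability relation, the right-hand vertical map is $E_{\fin}G \to E_{\vc}G$ and the left-hand vertical map is the disjoint union $\coprod_{C\in\cM}E_{\fin}G \to \coprod_{C\in\cM}E_{\vc[C]}G$ of canonical maps, where $\vc[C]$ denotes the family of subgroups of $G$ that are trivial or commensurable with $C$; since the square is a pushout these two maps have the same cofiber, so applying $H^{\Z^n}_*(-;\bfK)$ (which takes disjoint unions to direct sums) gives
\[
H^{\Z^n}_q(E_{\vc},E_1;\bfK) \;\cong\; \bigoplus_{C\in\cM} H^{\Z^n}_q(E_{\vc[C]}, E_{\fin};\bfK).
\]

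To compute a single local term I would use that $C$, being a \emph{maximal} cyclic subgroup, is a direct summand of $\Z^n$, so choose a splitting $\Z^n = A\times C$ with $A\cong\Z^{n-1}$. Then $EA\times EC \cong \R^{n-1}\times\R$ is a model for $E_{\fin}(\Z^n)$, while $EA\times E_{\all}C \cong \R^{n-1}$ --- with $A$ acting by translations and $C$ trivially --- is a model for $E_{\vc[C]}(\Z^n)$, and the canonical map $E_{\fin}(\Z^n)\to E_{\vc[C]}(\Z^n)$ is the projection $\R^{n-1}\times\R\to\R^{n-1}$. The crucial point is a K\"unneth splitting for Davis--L\"uck homology over the product $A\times C$: because $A$ acts freely on $EA$, and because $G$ is abelian so that the Weyl-group twisting of the resulting coefficient system on $EA/A = T^{n-1}$ is trivial, the cellular chain complex of $T^{n-1}$ --- a bounded complex of free abelian groups with free homology, so that $T^{n-1}$ stably splits as a wedge of spheres --- factors out, yielding
\[
H^{A\times C}_q\big(EA\times E_{\all}C,\; EA\times EC;\;\bfK\big) \;\cong\; \bigoplus_{i=0}^{n-1}\binom{n-1}{i}\,H^{C}_{q-i}\big(E_{\all}C,\, EC;\,\bfK\big).
\]

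It remains to evaluate the building block $H^{C}_*(E_{\all}C, EC;\bfK) = H^{\Z}_*(\pt, E\Z;\bfK)$. In the free case one has $H^{\Z}_*(E\Z;\bfK)\cong K_*R\oplus K_{*-1}R$ (ordinary $K$-theory homology of $B\Z = S^1$), while $H^{\Z}_*(\pt;\bfK) = K_*(R[\Z])$; by the Bass--Heller--Swan theorem the assembly map $H^{\Z}_*(E\Z;\bfK)\to H^{\Z}_*(\pt;\bfK)$ is split injective with cokernel $NK_*R\oplus NK_*R$, so the long exact sequence of the pair collapses to $H^{\Z}_q(\pt, E\Z;\bfK)\cong 2\,NK_qR$. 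Substituting back gives $H^{\Z^n}_q(E_{\vc[C]}, E_{\fin};\bfK)\cong\bigoplus_{i=0}^{n-1}2\binom{n-1}{i}NK_{q-i}R$, and summing over $C\in\cM$ produces the asserted formula.

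The steps I expect to require the most care are the two pieces of machinery in the middle, rather than any hard new computation. One is checking that the L\"uck--Weiermann data for $(\Z^n;\fin\subset\vc)$ is exactly as described --- in particular identifying the family $\vc[C]$ and the explicit models for $E_{\fin}(\Z^n)$, $E_{\vc[C]}(\Z^n)$, and $E_{\vc}(\Z^n)$. The other is establishing the K\"unneth splitting for $H^{A\times C}_*(-;\bfK)$ with the correct, untwisted coefficient system; once that is in hand, the remainder is just the long exact sequence of a pair together with the classical Bass--Heller--Swan decomposition of $K_*(R[\Z])$.
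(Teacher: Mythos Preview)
Your proposal is correct and follows essentially the same route as the paper's proof: decompose over $\cM$ via a pushout, split each local term over the torus $T^{n-1}$, and finish with the Bass--Heller--Swan identification $H^{\Z}_q(\pt,E\Z;\bfK)\cong 2\,NK_qR$ (the paper's Lemma~\ref{dlnil} with $\go=\Z$). The only differences are packaging --- the paper builds an explicit pushout model for $E_{\vc}$ by hand rather than citing L\"uck--Weiermann (though it remarks afterward that their result applies), and it phrases the K\"unneth step as the collapse at $E_2$ of the Atiyah--Hirzebruch spectral sequence rather than as the stable splitting of the torus.
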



See Remark \ref{precise} for a description of the isomorphism in Theorem \ref{Zn}.
Assuming a theorem of Frank Quinn \cite{Q1}, proven using controlled topology, we will show the following corollaries in Section \ref{Quinn}.

\begin{corollary}  \label{laurent}
\begin{multline*}
K_qR[t_1,t_1^{-1}, \dots, t_n, t_n^{-1}]  = K_qR[\Z^n]\\
 \cong \bigoplus_{n \geq i \geq 0} \binom{n}{i} K_{q-i}R~ \oplus  ~\bigoplus_{C \in \cM}  ~ \bigoplus_{n-1 \geq i \geq 0} 2\binom{n-1}{i} NK_{q-i}R.
\end{multline*}
\end{corollary}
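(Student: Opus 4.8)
The plan is to obtain Corollary~\ref{laurent} as a formal consequence of Theorem~\ref{Zn}, the cited theorem of Quinn~\cite{Q1}, and the classical Bass--Heller--Swan decomposition, by running the long exact sequence of the pair $(E_{\vc}, E_1)$ for the Davis--L\"uck homology theory $H^{\Z^n}_*(-;\bfK)$.

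First I would assemble three inputs. Since $E_{\all}\Z^n = \pt$, one has $H^{\Z^n}_q(E_{\all};\bfK) = K_q R[\Z^n]$. The theorem of Quinn, in the reformulation of \cite{DL98}, asserts the Farrell--Jones conjecture for $\Z^n$, that is $H^{\Z^n}_*(E_{\all}, E_{\vc};\bfK) = 0$; hence the relative assembly map $H^{\Z^n}_q(E_{\vc};\bfK) \to K_q R[\Z^n]$ is an isomorphism. Finally, since $\Z^n$ acts freely on $E_1\Z^n = E\Z^n$, the theory of \cite{DL98} identifies $H^{\Z^n}_q(E_1;\bfK)$ with the ordinary homology $H_q(B\Z^n;\bfK(R)) = H_q(T^n;\bfK(R))$ of the $n$-torus with coefficients in the nonequivariant spectrum $\bfK(R)$, $\pi_*\bfK(R) = K_*R$; the stable splitting $(T^n)_+ \simeq \bigvee_{i=0}^{n}\binom{n}{i}S^i$ then gives $H^{\Z^n}_q(E_1;\bfK) \cong \bigoplus_{n \ge i \ge 0}\binom{n}{i}K_{q-i}R$.

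Next I would feed these into the long exact sequence of the pair,
$$
\cdots \to H^{\Z^n}_q(E_1;\bfK) \xrightarrow{\ \iota_*\ } H^{\Z^n}_q(E_{\vc};\bfK) \to H^{\Z^n}_q(E_{\vc},E_1;\bfK) \xrightarrow{\ \partial\ } H^{\Z^n}_{q-1}(E_1;\bfK) \to \cdots,
$$
and show that $\iota_*$ is split injective, so that $\partial = 0$ and the sequence breaks into split short exact sequences. For this, the maps $E_1 \to E_{\vc} \to E_{\all}$ exhibit $\iota_*$ as a factor of the classical assembly map $H_q(T^n;\bfK(R)) \to K_q R[\Z^n]$, and iterating the natural Bass--Heller--Swan splitting of $K_q R \hookrightarrow K_q R[\Z]$ over the $n$ circle factors of $T^n$ shows this assembly map is split injective. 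Combining with Quinn's isomorphism gives
$$
K_q R[\Z^n] \;\cong\; H^{\Z^n}_q(E_{\vc};\bfK) \;\cong\; H^{\Z^n}_q(E_1;\bfK)\ \oplus\ H^{\Z^n}_q(E_{\vc},E_1;\bfK),
$$
and substituting the description of $H^{\Z^n}_q(E_1;\bfK)$ from above and of $H^{\Z^n}_q(E_{\vc},E_1;\bfK)$ from Theorem~\ref{Zn} yields the stated formula.

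I do not expect a genuine obstacle here, since all of the real content is in Theorem~\ref{Zn} and in Quinn's theorem and this corollary is essentially bookkeeping. The one place needing care is the splitting of the long exact sequence: one must invoke the classical fact (not reproved in this note) that the assembly map for $\Z^n$ out of the free classifying space is split injective, and check that the Bass--Heller--Swan splittings iterate compatibly, so that the multiplicities $\binom{n}{i}$ appear exactly as written rather than being obscured by a nontrivial extension.
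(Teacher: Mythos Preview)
Your proposal is correct and follows essentially the same route as the paper. Both arguments (i) identify $H^{\Z^n}_q(E_1;\bfK)$ with $H_q(T^n;\bfK(R))\cong\bigoplus_i\binom{n}{i}K_{q-i}R$, (ii) use Quinn's theorem to replace $E_{\all}$ by $E_{\vc}$ in the relative term so that Theorem~\ref{Zn} applies, and (iii) split the long exact sequence by showing that the map out of $H^{\Z^n}_q(E_1;\bfK)$ is split injective via an iterated Bass--Heller--Swan argument. The only cosmetic differences are that the paper runs the exact sequence of the pair $(E_{\all},E_1)$ rather than $(E_{\vc},E_1)$, and phrases the inductive splitting as an application of its Lemma~\ref{dlnil}(1) for $\go=\Z$ rather than invoking Bass--Heller--Swan by name; these are the same statement.
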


The isomorphism is described explicitly in Remark \ref{explicit}.

\begin{corollary} \label{kregular} Let $q$ be an integer.
If $NK_jR = 0$ when $q \geq j \geq q-n+1$, then $$K_qR[t_1, \dots, t_n] = K_qR.$$
\end{corollary}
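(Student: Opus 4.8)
The plan is to deduce this from Corollary \ref{laurent} via the Fundamental Theorem of algebraic $K$-theory. First I would recall Bass--Heller--Swan: for any ring $A$ there is a natural splitting $K_q(A[t]) \cong K_q(A) \oplus NK_q(A)$, and for the Laurent extension $K_q(A[t,t^{-1}]) \cong K_q(A) \oplus K_{q-1}(A) \oplus NK_q(A) \oplus NK_q(A)$, inside which $K_q(A[t])$ sits as a natural direct summand via $A[t] \hookrightarrow A[t,t^{-1}]$. Iterating the first isomorphism over $t_1,\dots,t_n$ exhibits $K_qR$ as a natural retract of $K_q(R[t_1,\dots,t_n])$ --- the image of the $n$-fold augmentation $t_j \mapsto 0$ --- with complementary summand a finite sum of iterated Nil groups; iterating the second over the successive localizations $R[t_1,\dots,t_n] \hookrightarrow R[t_1^{\pm1},t_2,\dots,t_n] \hookrightarrow \dots \hookrightarrow R[\Z^n]$ exhibits $K_q(R[t_1,\dots,t_n])$ itself as a natural direct summand of $K_q(R[\Z^n])$, its distinguished copy of $K_qR$ landing in the ``constant'' ($i=0$) summand of the latter.

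Next I would feed in the hypothesis. Corollary \ref{laurent} writes $K_q(R[\Z^n])$ as a sum of copies of $K_{q-i}R$ for $0 \le i \le n$ and of copies of $NK_{q-i}R$ for $0 \le i \le n-1$; the assumption $NK_jR = 0$ for $q \ge j \ge q-n+1$ annihilates exactly the Nil summands, so under the hypothesis $K_q(R[\Z^n]) \cong \bigoplus_{i=0}^{n}\binom{n}{i}K_{q-i}R$. It remains to see that the polynomial retract meets only the $i=0$ summand. For this I would invoke the explicit and natural form of the isomorphism of Corollary \ref{laurent} recorded in Remark \ref{explicit}, and in particular its compatibility with the Bass--Heller--Swan splittings above: each shifted summand $K_{q-i}R$ with $i \ge 1$ arises from a Fundamental-Theorem contribution attached to genuinely inverting some $t_j$, hence lies in the complement of the polynomial retract. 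Therefore $K_q(R[t_1,\dots,t_n])$ is carried into the distinguished $K_qR \subseteq K_q(R[\Z^n])$, and since the augmentation $K_q(R[t_1,\dots,t_n]) \to K_qR$ is split surjective, this forces $K_q(R[t_1,\dots,t_n]) = K_qR$.

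Equivalently, one may induct on $n$: Bass--Heller--Swan gives $K_q(R[t_1,\dots,t_n]) \cong K_q(R[t_1,\dots,t_{n-1}]) \oplus NK_q(R[t_1,\dots,t_{n-1}])$, the first term equals $K_qR$ by the inductive hypothesis (whose range $q \ge j \ge q-n+2$ is contained in the present one), and the second, being a natural retract of $K_q(R[\Z^n])$ that is Nil in nature, must vanish once every $NK_{q-i}R$ does.

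The crux is the compatibility used in the second paragraph: matching the isomorphism of Corollary \ref{laurent} with the chain of Bass--Heller--Swan splittings carefully enough to be certain that the polynomial retract of $K_q(R[\Z^n])$ captures only the single distinguished copy of $K_qR$, and neither the shifted summands $K_{q-i}R$ $(i \ge 1)$ nor --- \emph{a priori} --- any of the now-vanishing Nil terms. Granting Remark \ref{explicit}, this is a matter of tracking naturality in $R$ through the Fundamental Theorem; it is the only point not formally forced by Bass--Heller--Swan.
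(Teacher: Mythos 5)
Your argument is correct and is essentially the paper's: both reduce the statement to comparing the decomposition of Corollary \ref{laurent} with the fundamental-theorem decomposition $(I+2N+L)^nK_qR$ of equation (\ref{ftiso}), and both turn on the very crux you single out, namely that the two decompositions agree on the non-Nil part --- which the paper justifies by identifying the map $H^{\Z^n}_q(E_1;\bfK)\to H^{\Z^n}_q(E_{\all};\bfK)$ with the split injection $(I+L)^nK_qR\to K_qR[\Z^n]$, reducing to the case $n=1$ (product with $t\in K_1\Z[t,t^{-1}]$, or \cite{HP}), rather than by appealing to Remark \ref{explicit}, whose internal decomposition in fact rests on that same identification. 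The remaining difference is only bookkeeping: where you embed $K_qR[t_1,\dots,t_n]$ as a retract of $K_qR[\Z^n]$ and show it meets only the distinguished copy of $K_qR$, the paper deduces from the induced isomorphism of quotients that $((I+2N+L)^n-(I+L)^n)K_qR=0$, hence $N^iK_qR=0$ for $n\ge i\ge 1$, and concludes via equation (\ref{ftpoly}).
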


These corollaries was proved in \cite{CHW} in the case where $R$ is a commutative ring containing the rationals.  Their techniques are from algebraic geometry and commutativity is crucial for their proof.

This corollary is a partial answer to Bass' question \cite[Question $(IV)_n$]{BQ} asking does $NK_qR = 0$ imply that $K_qR[t_1,t_2] = K_qR$?  However, recently  Corti\~nas, Haesemeyer, and Weibel \cite{CHW} recently showed the answer is no in general.

Corollary \ref{countable} in Section \ref{Quinn} is a special case of the following conjecture. 

\begin{conjecture}
Let $\cM_+$ be the set of maximal cyclic subgroups of $\Z^n$ with a generator having  all positive coordinates.  Then
$$
N^nK_qR \cong \bigoplus_{C \in \cM_+} \bigoplus_{n-1 \geq i \geq 0} \binom{n-1}{i} NK_{q-i}R.
$$
\end{conjecture}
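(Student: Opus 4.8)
\medskip
\noindent\textbf{Proof strategy.} The plan is to reduce the conjecture to the case $n=2$ and then to prove that case by sharpening the geometry behind Theorem \ref{Zn}. For the reduction, recall the iterated Bass splitting $N^{p-1}K_q(R[t])\cong N^{p-1}K_q(R)\oplus N^pK_q(R)$ (this is how $N^pK_q$ is built, $N^pK_q(R)$ being the kernel of the retraction induced by $t\mapsto 0$), and that all the constructions here are natural in $R$; a proof of the conjecture will produce a natural isomorphism, so assume it in that functorial form. Argue by induction on $n$, the cases $n=0,1$ being trivial (for $n=1$ one has $\cM_+=\{\Z\}$). Granting the $(n-1)$-variable case (with $n-1\geq 2$) and the two-variable case, apply the $(n-1)$-variable case to the ring $R[t]$ and use $NK_{q-i}(R[t])\cong NK_{q-i}(R)\oplus N^2K_{q-i}(R)$ (the two-variable fundamental theorem) to get, compatibly with $t\mapsto 0$,
$$
N^{n-1}K_q(R[t])\;\cong\;\bigoplus_{C\in\cM_+(\Z^{n-1})}\ \bigoplus_{n-2\geq i\geq 0}\binom{n-2}{i}\bigl(NK_{q-i}R\oplus N^2K_{q-i}R\bigr).
$$
Passing to kernels of $t\mapsto 0$ identifies $N^nK_qR$ with $\bigoplus_C\bigoplus_i\binom{n-2}{i}N^2K_{q-i}R$; substituting the two-variable case $N^2K_jR\cong\bigoplus_{\cM_+(\Z^2)}(NK_jR\oplus NK_{j-1}R)$ then yields $\binom{n-2}{k}+\binom{n-2}{k-1}=\binom{n-1}{k}$ copies of $NK_{q-k}R$ per index, over the countably infinite index set $\cM_+(\Z^{n-1})\times\cM_+(\Z^2)$. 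Since $\cM_+(\Z^n)$ is likewise countably infinite with the same degree profile, the two direct sums are isomorphic, and the $n$-variable case follows. So everything reduces to $n=2$.

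For $n=2$ the goal is $N^2K_qR\cong\bigoplus_{C\in\cM_+(\Z^2)}(NK_qR\oplus NK_{q-1}R)$, and the natural route is to refine the proof of Theorem \ref{Zn} (equivalently, Quinn's controlled-topology theorem \cite{Q1}). One first checks consistency with Corollary \ref{laurent}: partition the maximal cyclic subgroups of $\Z^2$ by the sign pattern of a primitive generator --- the two coordinate axes, the ``all positive'' set $\cM_+(\Z^2)$, and the ``mixed sign'' set (the last two in evident bijection). The two axes contribute exactly the ``old'' summand $4NK_qR\oplus 4NK_{q-1}R$, while the remaining directions should carry the four copies of $N^2K_qR$ that iterated Bass--Heller--Swan exhibits inside $K_qR[\Z^2]$. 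To make this precise I would track two extra pieces of structure through the proof of Theorem \ref{Zn}. In the L\"uck--Weiermann-type decomposition of $E_{\vc}\Z^2$ relative to $E\Z^2$, the relative term splits as a sum over $C\in\cM(\Z^2)$, and the $C$-summand is built from the Nil part of $K_*(RC)=K_*(R[\Z])$ --- canonically two copies of $NK_*R$, one for each ring retraction $R[t]\to R$ and $R[t^{-1}]\to R$, i.e.\ for the two ends of the line in direction $C$ --- together with the homology of the transverse $\Z$, which produces the $NK_{q-1}R$ term via the Bass $K_{q-1}$-shift. The first refinement keeps this ``Nil at $0$ versus Nil at $\infty$'' splitting canonical; the second relates $K_*(R[t_1,t_2])=K_*(R[\mathbb{N}^2])$, in which $N^2K_qR$ is a distinguished summand, to $K_*(R[\Z^2])$ via the localization inverting $t_1$ and $t_2$, and shows this summand maps isomorphically onto the span of the ``Nil at $0$'' lines in the positive-cone directions together with their $K_{q-1}$-shifts. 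The general-$n$ shape then acquires a transparent meaning: a half-space (cone) localization contributes a single end and a single connecting homomorphism.

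The hard part --- and the reason the statement is only a conjecture --- is that there is at present no controlled-topology or assembly-map machinery computing the $K$-theory of a monoid ring $R[\mathbb{N}^n]$ directly: Quinn's theorem and the $E_{\cF}G$ apparatus are built for group rings, and the passage from the monoid $\mathbb{N}^n$ to the group $\Z^n$ --- precisely what must be controlled --- is exactly the asymmetry the standard machinery does not see. So the second step really requires a new, ``relative'' or ``cone,'' version of Quinn's theorem, equivalently an identification of the localization map $K(R[\mathbb{N}^n])\to K(R[\Z^n])$ with an analyzable map of equivariant homology theories; I expect constructing this to be the main work. A secondary subtlety is that one cannot simply cancel in the infinite direct sums: Corollary \ref{laurent} together with iterated Bass--Heller--Swan only forces four copies of $N^2K_qR$ to be isomorphic to a countable direct sum of copies of $NK_qR$ and $NK_{q-1}R$, and such a relation does not pin down $N^2K_qR$ itself --- one cannot take a fourth root --- so the isomorphism must be produced directly rather than extracted. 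Finally, when $R$ is commutative and contains $\Q$, the cdh-descent and Goodwillie-type methods of \cite{CHW} may give an independent handle on the $n=2$ case, which via the reduction above would settle that case; for an arbitrary ring, however, the conjecture appears to be open.
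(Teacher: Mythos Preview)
The statement you are addressing is labeled a \emph{Conjecture} in the paper, and the paper does not claim to prove it in general. So there is no ``paper's own proof'' to compare against; what the paper does establish is the special case recorded as Corollary~\ref{countable}, valid when $NK_qR,\dots,NK_{q-n+1}R$ are all countable torsion. You correctly recognize that the full statement is open and present a strategy rather than a proof.

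Your reduction to $n=2$ is sound as stated (and the naturality bookkeeping is handled correctly: once the $(n-1)$-case and the $2$-case are natural in $R$, so is the induced isomorphism for $n$, the countable bijection of index sets being chosen once and for all). But note that it buys you nothing toward the partial result the paper actually proves: the $n=2$ case is already exactly the ``cannot take a fourth root'' problem you flag, and it is no easier than general~$n$ from the paper's point of view.

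The paper's route to Corollary~\ref{countable} is quite different from yours and does not pass through $n=2$. It works directly at level $n$: from the explicit description of the $\cM$-summands inside $K_qR[\Z^n]$ (Remark~\ref{explicit}) one extracts
\[
2^n\,N^nK_qR \;\cong\; \bigoplus_{C\in\cM_{\neq 0}}\ 2\bigoplus_i \binom{n-1}{i} NK_{q-i}R,
\]
where $\cM_{\neq 0}$ consists of the maximal cyclic subgroups with all coordinates nonzero. The sign map $\langle(x_1,\dots,x_n)\rangle\mapsto\langle(|x_1|,\dots,|x_n|)\rangle$ is $2^{n-1}$-to-$1$ from $\cM_{\neq 0}$ onto $\cM_+$, so this reads $2^n N^nK_qR\cong 2^n\bigl(\text{conjectured RHS}\bigr)$. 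At this point the paper invokes Ulm's theorem: for countable torsion abelian groups, $A\oplus A\cong B\oplus B$ implies $A\cong B$, and iterating gives the desired cancellation. Your diagnosis that ``one cannot take a fourth root'' in general is therefore exactly on target and is precisely why the paper needs the countable-torsion hypothesis.

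In short: your proposal is an honest outline of what a proof would require and correctly identifies the obstruction, but it neither proves the conjecture nor recovers the paper's partial result. If you want to match the paper, replace the reduction-to-$n=2$ with the direct argument above: compare the two decompositions of $K_qR[\Z^n]$ (Corollary~\ref{laurent} versus iterated Bass--Heller--Swan), isolate the $\cM_{\neq 0}$ block, observe the $2^{n-1}$-fold symmetry over $\cM_+$, and then cancel with Ulm under the countable-torsion hypothesis.
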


This conjecture was proven by Corti\~nas, Haesemeyer, and Weibel \cite[Corollary 4.2]
{CHW} in the case where $R$ is a commutative ring containing the rationals.

We need some notation to state our next result.  Given a homomorphism    $\varphi : G \to  \Gamma$ and a family $\cF$ of subgroups of $\Gamma$, let $\varphi^*\cF$ be the smallest family of subgroups of $G$ containing $\varphi^{-1}H$ for each $H \in \cF$.  If $\alpha : A \to A$ is a ring automorphism, let $A_{\alpha}[t]$ be the twisted polynomial ring and define 
$$
NK_q(A,\alpha) = \ker(K_q(A_{\alpha}[t]) \to K_q(A)).
$$
These groups were first defined by Farrell.  

Let $D_\infty = \Z_2 * \Z_2 = \langle a,b~|~a^2 = 1 = b^2 \rangle$ be the infinite dihedral group.

\begin{theorem} \label{infinite_dihedral}
Let $\varphi : \g \to D_{\infty}$ be an epimorphism of groups.  Let $F = \ker \varphi$. Choose $\hat t \in \g$ so that $\varphi(\hat t) = ab$.    Let $\alpha$ be the automorphism of $RF$ given by conjugation by $\hat t$.  Then
$$
H^\g_q(E_{\varphi^*\cfin},E_{\varphi^*\fin}; \bfK) \cong NK_q(RF,\alpha).
$$
\end{theorem}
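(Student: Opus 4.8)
The plan is to peel the relative term down to a twisted Bass--Heller--Swan computation over the index-two subgroup $\go := \varphi^{-1}(\langle ab\rangle)$ and then ``fold'' by the quotient $\g/\go\cong\Z_2$. Write $\cS(H)$ for the family of all subgroups of a subgroup $H\le\g$. The reduction is set in motion by two family identities,
$$
\varphi^*\cfin \;=\; \varphi^*\fin\,\cup\,\cS(\go),\qquad
\varphi^*\fin\,\cap\,\cS(\go)\;=\;\cS(F),
$$
both immediate, since $\langle ab\rangle\cong\Z$ has no nontrivial finite subgroup and every infinite cyclic subgroup of $\di$ lies in $\langle ab\rangle$. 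For families $\cF_1,\cF_2$ the maps $E_{\cF_1\cap\cF_2}\to E_{\cF_1}$ and $E_{\cF_1\cap\cF_2}\to E_{\cF_2}$ form a homotopy pushout square whose pushout is $E_{\cF_1\cup\cF_2}$, so excision gives an isomorphism
$$
H^\g_q(E_{\varphi^*\cfin},E_{\varphi^*\fin};\bfK)\;\cong\;H^\g_q\big(E_{\cS(\go)}\g,\;E_{\cS(F)}\g;\bfK\big)
$$
induced by the canonical $\g$-map $E_{\cS(F)}\g\to E_{\cS(\go)}\g$.

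Next I would make the $\go$-picture explicit. Since $\R\times E\Z_2$ (with $\di$ acting on $\R$ as the reflection group and on $E\Z_2$ through $\di\to\di/\langle ab\rangle$) is a model for $E\di$, pulling back along $\varphi$ gives $E_{\cS(F)}\g=\varphi^*E\di=E_{\varphi^*\fin}\g\times E_{\cS(\go)}\g$, under which the map to $E_{\cS(\go)}\g$ is the projection. Now $Y:=E_{\cS(\go)}\g$ may be taken to be $\rho^*E\Z_2$ for $\rho:\g\twoheadrightarrow\g/\go\cong\Z_2$; it is a free $\Z_2$-CW-complex built entirely from cells of orbit type $\g/\go$, on which $\go$ acts trivially. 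Using $Z\times\g/\go\cong\g\times_{\go}\mathrm{res}_{\go}Z$ and the induction isomorphism of \cite{DL98}, one checks that feeding the skeletal filtration of $Y$ into the chain complex computing equivariant $\bfK$-homology produces, for every $\g$-CW-complex $Z$, a natural isomorphism
$$
H^\g_*(Z\times Y;\bfK)\;\cong\;\pi_*\big(\mathbf C(Z)_{h\Z_2}\big),
$$
where $\mathbf C(Z)$ is a spectrum with $\pi_*\mathbf C(Z)=H^{\go}_*(\mathrm{res}_{\go}Z;\bfK)$ and $\Z_2$ acts by the deck transformation, i.e.\ by conjugation by a lift $\hat\sigma\in\g$ of a reflection. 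Applying this to $Z=\mathrm{pt}$ and $Z=E_{\varphi^*\fin}\g$, whose restrictions to $\go$ are $\mathrm{pt}$ and $E_{\cS(F)}\go$, and passing to cofibers,
$$
H^\g_q\big(E_{\cS(\go)}\g,E_{\cS(F)}\g;\bfK\big)\;\cong\;\pi_q\big(\bfN_{h\Z_2}\big),\qquad \bfN:=\mathrm{cofib}\big(\mathbf C(E_{\varphi^*\fin}\g)\to\mathbf C(\mathrm{pt})\big),
$$
so that $\pi_q\bfN=H^{\go}_q(\mathrm{pt},E_{\cS(F)}\go;\bfK)$, carrying its residual $\Z_2$-action.

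It remains to compute $\bfN$ with its action. Here $\go=F\rtimes_\alpha\langle\hat t\rangle$ and $R\go=(RF)_\alpha[t,t^{-1}]$, and the $\go$-assembly map for $\cS(F)$ is the classical twisted Bass--Heller--Swan map, whose cofiber is the Nil part: $\pi_q\bfN\cong NK_q(RF,\alpha)\oplus NK_q(RF,\alpha^{-1})$. Since a reflection inverts $ab$, conjugation by $\hat\sigma$ sends $\hat t$ to $f_0\hat t^{-1}$ for some $f_0\in F$, hence carries the subring $(RF)_\alpha[t]$ isomorphically onto $(RF)_{\alpha^{-1}}[t^{-1}]$ and therefore interchanges the two Nil summands (this interchange is an involution on K-theory because $\hat\sigma^2\in\go$ acts through an inner automorphism). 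Thus the $\Z_2$-module $\pi_q\bfN$ is induced from the trivial subgroup, and by Shapiro's lemma the homotopy-orbit spectral sequence $E^2_{p,q}=H_p(\Z_2;\pi_q\bfN)\Rightarrow\pi_{p+q}(\bfN_{h\Z_2})$ is concentrated in the column $p=0$, where it is $NK_q(RF,\alpha)$; so it collapses without extension problems and
$$
H^\g_q(E_{\varphi^*\cfin},E_{\varphi^*\fin};\bfK)\;\cong\;\pi_q(\bfN_{h\Z_2})\;\cong\;NK_q(RF,\alpha).
$$

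The step I expect to be the main obstacle is the middle one. The naive assertion $H^\g_*(X;\bfK)\cong H^{\go}_*(\mathrm{res}_{\go}X;\bfK)_{h\Z_2}$ is false in general; what rescues it here is that, after excision, one is computing the $\bfK$-homology of a space of the special shape $Z\times E_{\cS(\go)}\g$ with $E_{\cS(\go)}\g$ a free $\Z_2$-CW-complex on which $\go$ acts trivially. Setting up that homotopy-orbit description carefully inside the Davis--L\"uck chain-level formalism, and checking that the residual action is conjugation by $\hat\sigma$, is the delicate point; granting it, the twisted Bass--Heller--Swan decomposition and the elementary fact that reflections invert translations finish the proof.
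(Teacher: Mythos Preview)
Your argument is correct and lands on the same endgame as the paper---the computation of $H_0\bigl(\Z_2;\,NK_q(RF,\alpha)\oplus NK_q(RF,\alpha^{-1})\bigr)$ with the swap action---but the route there is packaged differently. The paper does not excise via the family pushout $E_{\varphi^*\cfin}\simeq E_{\varphi^*\fin}\cup_{E_{\cS(F)}}E_{\cS(\go)}$; instead it builds a cofiber $\Or\g$-spectrum $\bfN=\mathrm{cofib}(\bfK_{\varphi^*\fin}\to\bfK)$, proves a general categorical lemma (Lemma~\ref{spectra}) identifying $H^\g_*(E_{\varphi^*\cfin},E_{\varphi^*\fin};\bfK)$ with $H^\g_*(E_{\varphi^*\cfin};\bfN)$, and then uses Hambleton's explicit join model $E_{\varphi^*\cfin}=S^\infty*\R$ to strip away the cells with isotropy in $\varphi^*\fin$ and reduce to $H^\g_*(S^\infty;\bfN)$. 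Since $S^\infty$ carries constant isotropy $\go$, the skeletal spectral sequence degenerates to $H_*\bigl(\R P^\infty;\,\pi_*\bfN(\g/\go)\bigr)$ with local coefficients---which is exactly your homotopy-orbit spectral sequence, your $E_{\cS(\go)}\g=\rho^*E\Z_2$ being the paper's $S^\infty$. What your approach buys is directness: the family pushout and the induction isomorphism are off-the-shelf, and you avoid the categorical digression needed to set up Lemma~\ref{spectra}. What the paper's approach buys is that the ``delicate point'' you flag---a coherent $\Z_2$-action on the cofiber spectrum---is absorbed into the functoriality of an honest $\Or\g$-spectrum: the action of a reflection on $\pi_*\bfN(\g/\go)$ is simply $\bfN$ applied to the self-map of $\g/\go$, so no ad hoc rigidification is needed.
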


To say that a group admits an epimorphism to the infinite dihedral group is equivalent to saying that it admits an amalgamated product decomposition $G_0 *_F G_1$ where $F$ is of index 2 in $G_0$ and $G_1$.

Theorem \ref{infinite_dihedral} is applied in \cite{DKR} to reduce the Farrell-Jones Conjecture in $K$-theory from the family of virtually cyclic groups to the family of finite-by-cyclic groups.

Assuming a theorem of Frank Quinn \cite{Q2}, we have the following corollary.  The notation in its statement  will be discussed in the next section.

\begin{corollary} \label{dv}
Let $\varphi : \g \to D_{\infty}$ be an epimorphism of groups and $\g = G_0 *_F G_1$ the 
corresponding amalgamated free product decomposition with $F = \ker \varphi$.  Choose $\hat t \in \g$ so that $\varphi(\hat t) = ab$, and let $\alpha : RF \to RF$ the automorphism given by conjugation by $\hat t$.  Then the Waldhausen Nil group is isomorphic to the Farrell Nil group:
$$
NK_q(RF; \widehat{R G_0}, \widehat{RG_1 }) \cong NK_q(RF, \alpha).
$$
\end{corollary}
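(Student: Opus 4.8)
Let me think about how to prove that the Waldhausen Nil group for the amalgamated product decomposition $\g = G_0 *_F G_1$ agrees with the Farrell Nil group $NK_q(RF,\alpha)$.

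The plan is to route both sides through the equivariant homology group $H^\g_q(E_{\varphi^*\cfin}, E_{\varphi^*\fin}; \bfK)$, which Theorem \ref{infinite_dihedral} has already identified with $NK_q(RF,\alpha)$. So the task reduces to identifying this same relative homology group with the Waldhausen Nil group $NK_q(RF; \widehat{RG_0}, \widehat{RG_1})$. This is exactly the role played by the theorem of Quinn \cite{Q2} cited just before the statement: Quinn's controlled-topology methods give an identification of the relative term $H^\g_q(E_{\all}, E_{\varphi^*\fin}; \bfK)$ — or the analogous one for $\cF$ versus $\fin$ — with the appropriate Waldhausen Nil term arising from the amalgamated product structure of $\g$ over $F$. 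Concretely, I would first recall that for a group splitting as an amalgamated product $G_0 *_F G_1$, Waldhausen's long exact sequence expresses $K_q(R\g)$ in terms of $K_q(RG_0)$, $K_q(RG_1)$, $K_q(RF)$, and the Waldhausen Nil group $NK_q(RF; \widehat{RG_0}, \widehat{RG_1})$; the Nil summand is precisely the obstruction to the Mayer–Vietoris sequence being exact.

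The key steps, in order: (1) Observe that $E_{\varphi^*\fin}\g$ can be taken to be the classifying space for the family of subgroups of $\g$ mapping to a finite subgroup of $D_\infty$ — since every finite subgroup of $D_\infty$ is contained in one of the two conjugacy classes of $\Z_2$, and $\varphi^{-1}(\Z_2)$ is (the stabilizer corresponding to) one of the two vertex groups $G_0, G_1$, this space is $\g$-homotopy equivalent to the Bass–Serre tree $T$ on which $\g$ acts with the $G_i$ as vertex stabilizers and $F$ as edge stabilizers. (2) Compute $H^\g_q(E_{\varphi^*\fin}; \bfK) = H^\g_q(T; \bfK)$ via the equivariant Atiyah–Hirzebruch / cellular spectral sequence of the tree, which collapses to the Mayer–Vietoris-type exact sequence $\cdots \to K_q(RF) \to K_q(RG_0)\oplus K_q(RG_1) \to H^\g_q(T;\bfK) \to \cdots$. (3) Invoke Quinn's theorem \cite{Q2}: the natural map $H^\g_q(E_{\varphi^*\fin};\bfK) \to H^\g_q(E_{\varphi^*\cfin};\bfK)$ fits into a long exact sequence whose relative term, computed via controlled topology from the splitting, is the Waldhausen Nil group; equivalently, $H^\g_q(E_{\varphi^*\cfin};\bfK)$ is the direct sum of $H^\g_q(T;\bfK)$ (which by Waldhausen equals $K_q(R\g)$ modulo the Nil term) and the Nil term, and the relative group picks out exactly $NK_q(RF; \widehat{RG_0}, \widehat{RG_1})$. (4) Combine with Theorem \ref{infinite_dihedral}: $NK_q(RF; \widehat{RG_0}, \widehat{RG_1}) \cong H^\g_q(E_{\varphi^*\cfin}, E_{\varphi^*\fin};\bfK) \cong NK_q(RF,\alpha)$.

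I expect the main obstacle to be step (3): matching Quinn's controlled-topology computation of the relative homology group against the algebraically-defined Waldhausen Nil group. One must check that the controlled category Quinn assigns to the pair $(E_{\varphi^*\cfin}, E_{\varphi^*\fin})$, after identifying the building blocks with the vertex and edge groups $G_0, G_1, F$, really produces Waldhausen's $\bfnil$-category with the bimodules $\widehat{RG_0}$ and $\widehat{RG_1}$ (the $R$-coefficient modules over $RF$ coming from the two index-2 inclusions), and not some twisted or shifted variant; one must also verify that the degree-shift indexing matches, i.e. that there is no suspension discrepancy between the homological degree $q$ and the Nil degree $q$. A secondary, more routine point is confirming that $E_{\varphi^*\fin}\g \simeq T$ and that no exotic infinite subgroups sneak into the family $\varphi^*\fin$ — this follows because $\varphi^*\fin$ is generated by $\varphi^{-1}$ of finite subgroups of $D_\infty$, all of which are finite or the preimages $G_0, G_1$ of the two copies of $\Z_2$, giving exactly the isotropy of the tree action.
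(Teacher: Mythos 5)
Your overall strategy --- routing both Nil groups through the relative group $H^\g_q(E_{\varphi^*\cfin},E_{\varphi^*\fin};\bfK)$ and finishing with Theorem \ref{infinite_dihedral} --- is the paper's strategy, but your step (3) contains a genuine gap: you attribute to Quinn's theorem \cite{Q2} the identification of the relative term with the Waldhausen Nil group, ``computed via controlled topology from the splitting.'' Quinn's theorem, as stated and used here (Theorem \ref{Quinn2}), asserts only the vanishing $H^\g_*(E_{\all},E_{\varphi^*\cfin};\bfK)=0$; it computes no relative term as a Nil group, so the key identification in your argument is left unproved. What supplies it is Lemma \ref{dlnil}(2): since $\varphi^*\fin$ is exactly the smallest family $\cF$ containing $G_0$ and $G_1$ (the finite subgroups of $D_\infty$ are the trivial group and the conjugates of $\langle a\rangle$ and $\langle b\rangle$, whose preimages under $\varphi$ are $F$ and the conjugates of $G_0$ and $G_1$), Waldhausen's splitting gives
$$
NK_q(RF;\widehat{RG_0},\widehat{RG_1}) \cong H^\g_q(E_{\all},E_{\varphi^*\fin};\bfK),
$$
with no controlled topology involved. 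Quinn's theorem then enters only to replace $E_{\all}$ by $E_{\varphi^*\cfin}$: in the long exact sequence of the triple $(E_{\all},E_{\varphi^*\cfin},E_{\varphi^*\fin})$ the groups $H^\g_*(E_{\all},E_{\varphi^*\cfin};\bfK)$ vanish, hence
$$
H^\g_q(E_{\all},E_{\varphi^*\fin};\bfK)\cong H^\g_q(E_{\varphi^*\cfin},E_{\varphi^*\fin};\bfK),
$$
and Theorem \ref{infinite_dihedral} completes the chain.

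Once the argument is arranged this way, the ``main obstacle'' you anticipate --- matching Quinn's controlled categories against Waldhausen's $\nil$-category with the bimodules $\widehat{RG_0},\widehat{RG_1}$, and checking for a degree shift --- simply does not arise: the Waldhausen Nil group is matched with relative equivariant homology purely algebraically, via Waldhausen's theorems (with the Bartels--L\"uck nonconnective version for $q<1$) as packaged in Lemma \ref{dlnil}(2), and Quinn's result is used solely as a vanishing statement. Your steps (1) and (2) (the Bass--Serre tree as a model for $E_{\varphi^*\fin}$ and the resulting Mayer--Vietoris sequence) are consistent with the pushout model for $E_{\cF}$ in the proof of Lemma \ref{dlnil}, and your step (4) is exactly the paper's final step; only the middle of the argument needs the repair above.
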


A different, purely algebraic proof of this corollary is given in \cite{DKR}.

\section{NK-groups and relative homology}

To prove our main theorems we need a starting point.  This will be to recast the theorems of Bass-Heller-Swan, Bass, Quillen, Farrell-Hsiang, and ultimately Waldhausen \cite[Theorems 1 and 2]{W} in terms of $H^G_q(E_{\all},E_{\cF};\bfK)$ for suitable families $\cF$.

Let $\alpha : A \to A$ be a ring automorphism.  Waldhausen shows that the maps
\begin{align*}
i_+ : K_qA_{\alpha}[t] \to K_qA_{\alpha}[t,t^{-1}] &\text{ induced by } t \mapsto t  \\
i_- : K_qA_{\alpha^{-1}}[t] \to K_qA_{\alpha}[t,t^{-1}] &\text{ induced by } t \mapsto t^{-1}
\end{align*}
are monomorphisms.  Let
\begin{align*}
N_+K_q(A,\alpha) &= i_+(NK_q(A,\alpha)) \\
N_-K_q(A,\alpha^{-1}) &= i_-(NK_q(A,\alpha^{-1})) 
\end{align*}
Waldhausen shows there is a split injection
$$
i_+ \oplus i_-: N_+K_q(A,\alpha) \oplus N_-K_q(A,\alpha^{-1}) \to  K_q(A_{\alpha}[t,t^{-1}])
$$
and a Wang type exact sequence 
$$
\dots \to  K_qA \xrightarrow{1 - \alpha} K_qA \to \frac{K_qA_{\alpha}[t,t^{-1}]}{N_+K_q(A,\alpha) \oplus N_-K_q(A,\alpha^{-1}) }          \to \cdots
$$

Rephrasing this in terms of group theory, let $\go$ be a group which maps epimorphically to $\Z$.  Then $\go = F \rtimes_{\alpha} \Z$ and there is
a Wang type exact sequence
$$
\dots \to  K_q(RF) \xrightarrow{1 - \alpha} K_q(RF) \to \frac{K_q(R\go)}{N_+K_q(RF,\alpha) \oplus N_-K_q(RF,\alpha^{-1}) }          \to \cdots
$$

In this section we also consider in parallel the case $\g= G_0 *_F G_1$ where $F$ is a subgroup of $G_0$ and $G_1$, not necessarily of index 2.
  Let $\widehat{R G_i}$ be the $RF$-bimodule $R[G_i - F]$ and define
$$
NK_q(RF;\widehat{RG_0},\widehat{RG_1}) = \widetilde{\nil}_{q-1}(RF;\widehat{RG_0},\widehat{RG_1}).
$$  
This reduced Nil group is a subgroup of the $K$-theory of the exact category defined by Waldhausen \cite{W}
$$
K_q\nil(RF;\widehat{RG_0},\widehat{RG_1}) = \widetilde{\nil}_q(RF;\widehat{RG_0},\widehat{RG_1}) \oplus K_q(RF) \oplus K_q(RF)
$$
 When $q < 1$, we use the nonconnective version due to Bartels-L\"uck \cite[Section 10]{BL} to define the $K$-theory of the Nil category.  Waldhausen gave a split injection 
$$
NK_q(RF;\widehat{RG_0},\widehat{RG_1}) \to K_q(R\g) 
$$
and a Mayer-Vietoris type long exact sequence
$$
\dots \to  K_q(RF) \to K_q(RG_0) \oplus K_q(RG_1) \to K_q(R\g)/NK_q     \to \cdots
$$

The main purpose of this section is the statement and indication of the proof of the following lemma.

\begin{lemma} \label{dlnil} Let $\cF_0$ be the smallest family of subgroups of $\go$ containing $F$.  Let $\cF$ be the smallest family of subgroups of $\g$ containing $G_0$ and $G_1$.
\begin{enumerate}

\item  The following exact sequences are split, and hence short exact
\begin{align*}
H^{\go}_q(E_{\cF_0};\bfK) \to &H^{\go}_q(E_{\all};\bfK) \to H^{\go}_q(E_{\all},E_{\cF_0};\bfK) \\
H^{\g}_q(E_{\cF};\bfK) \to &H^{\g}_q(E_{\all};\bfK) \to H^{\g}_q(E_{\all},E_{\cF};\bfK)
\end{align*}

\item  These relative terms can be expressed in terms of Nil groups:
\begin{align*}
H^{\go}_q(E_{\all},E_{\cF_0};\bfK) & \cong NK_q(RF,\alpha) \oplus  NK_q(RF,\alpha^{-1}) \\
H^{\g}_q(E_{\all},E_{\cF};\bfK) & \cong NK_q(RF;\widehat{RG_0},\widehat{RG_1}) 
\end{align*}

\end{enumerate}
\end{lemma}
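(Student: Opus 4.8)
The plan is to treat the two cases uniformly by Bass--Serre theory. The group $\go = F\rtimes_\alpha\Z$ is the fundamental group of a graph of groups with a single vertex group $F$ and a single (loop) edge group $F$, while $\g = G_0*_F G_1$ is the fundamental group of a graph of groups with vertex groups $G_0,G_1$ and edge group $F$; in both cases the associated Bass--Serre tree is a model for the classifying space. Concretely, for $\go$ the line $\R$, on which $\go$ acts through the translation action of the quotient $\go/F=\Z$, is a model for $E_{\cF_0}$: every point has stabilizer $F$, the fixed set $\R^H$ is all of $\R$ when $H$ is a subgroup of $F$ and empty otherwise, and $\R$ is contractible. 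For $\g$ the tree $T$ with vertex set $\g/G_0\sqcup\g/G_1$ and edge set $\g/F$ is a model for $E_{\cF}$: a cell stabilizer is a conjugate of $G_0$, $G_1$, or $F$, so it lies in $\cF$, and $T^H$ is a nonempty subtree (hence contractible) precisely when $H\in\cF$. I would record these two identifications first.

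Next I would push the skeletal filtration of these one-dimensional $G$-CW-complexes through the homology theory $H^G_*(-;\bfK)$, using $\pi_q\bfK(G/H)=K_q(RH)$. In the $\go$ case there is one orbit of $0$-cells and one orbit of $1$-cells, both of type $\go/F$, so the long exact sequence of the pair $(E_{\cF_0},(E_{\cF_0})_0)$ collapses to a Wang-type sequence
$$
\cdots\to K_q(RF)\xrightarrow{\,1-\alpha\,}K_q(RF)\to H^{\go}_q(E_{\cF_0};\bfK)\to K_{q-1}(RF)\xrightarrow{\,1-\alpha\,}\cdots,
$$
the differential being the self-map of $\bfK(\go/F)$ induced by translating by $\hat t$, which realizes $\alpha$ on homotopy. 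In the $\g$ case the $0$-cells form two orbits $\g/G_0$, $\g/G_1$ and the $1$-cells one orbit $\g/F$, and the same device yields a Mayer--Vietoris-type sequence
$$
\cdots\to K_q(RF)\xrightarrow{(i_0,\,-i_1)}K_q(RG_0)\oplus K_q(RG_1)\to H^{\g}_q(E_{\cF};\bfK)\to K_{q-1}(RF)\to\cdots,
$$
where $i_0,i_1$ are induced by the inclusions $F\hookrightarrow G_0$ and $F\hookrightarrow G_1$. These are, term for term, the Wang and Mayer--Vietoris sequences for $K_q(R\go)/(N_+K_q(RF,\alpha)\oplus N_-K_q(RF,\alpha^{-1}))$ and for $K_q(R\g)/NK_q(RF;\widehat{RG_0},\widehat{RG_1})$ recorded above from Waldhausen.

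To finish I would compare the two. The projection $E_{\cF}\to E_{\all}=\pt$ induces $H^G_q(E_{\cF};\bfK)\to K_q(RG)$, and composing with Waldhausen's quotient map gives a morphism from the skeletal long exact sequence to Waldhausen's sequence. On every term except $H^G_q(E_{\cF};\bfK)$ itself this morphism is, by functoriality of $H^G_*(-;\bfK)$, the map induced by the inclusions $F\hookrightarrow\go$, resp. $G_i\hookrightarrow\g$ and $F\hookrightarrow\g$, that is, the induction maps occurring in Waldhausen's sequence; hence it is an isomorphism there, and the five lemma gives $H^G_q(E_{\cF};\bfK)\xrightarrow{\ \cong\ }K_q(RG)/NK_q$. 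In particular $H^G_q(E_{\cF};\bfK)\to H^G_q(E_{\all};\bfK)$ is a monomorphism, and Waldhausen's split injection of the Nil term into $K_q(RG)$ provides a retraction of it; this is part (1). The long exact sequence of the pair then collapses to the split short exact sequence $0\to H^G_q(E_{\cF};\bfK)\to H^G_q(E_{\all};\bfK)\to H^G_q(E_{\all},E_{\cF};\bfK)\to 0$, identifying the relative term with the Nil group, which is part (2).

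The step I expect to be the main obstacle is matching the boundary maps of the skeletal long exact sequences with the maps $1-\alpha$ and $(i_0,-i_1)$ of Waldhausen's sequences on the nose, and matching the two connecting homomorphisms via the comparison map. This requires tracking the $G$-action on the attaching maps of the Bass--Serre tree, keeping careful account of the twisting by $\hat t$ (and of which of the two ends of the HNN loop contributes $\alpha$ and which $\alpha^{-1}$), and checking compatibility with the nonconnective delooping of Bartels--L\"uck, since Waldhausen's theorems are stated connectively whereas all $q\in\Z$ are needed here. The comparison on the skeleton terms, by contrast, is formal.
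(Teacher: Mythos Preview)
Your proposal is correct and follows essentially the same strategy as the paper: identify $E_{\cF_0}$ and $E_{\cF}$ with the Bass--Serre line/tree, extract the Wang and Mayer--Vietoris sequences from the skeletal filtration, and compare with Waldhausen's sequences to conclude. The paper carries out the comparison at the spectrum level---mapping the homotopy pushout square coming from the $G$-CW structure of $E_{\cF_0}$ into Waldhausen's homotopy cartesian square and reading off the split short exact sequence from the lower right corners---whereas you phrase the same comparison on homotopy groups via the five lemma; the technical obstacle you flag (matching boundary maps and the nonconnective delooping) is exactly the ``subtle point'' the paper records in a footnote about the natural transformation in Waldhausen's square restricting to the identity on the image of the skeletal square.
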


\begin{proof}
Define $E_{\cF_0} = E_{\cF_0}\go$ as a pushout of $\go$-spaces
\begin{equation} \label{squareone}
\begin{diagram}
\node{S^0 \times \go/F} \arrow{e} \arrow{s} \node{\go/F} \arrow{s}\\
\node{D^1 \times \go/F} \arrow{e} \node{E_{\cF_0}}
\end{diagram}
\end{equation}
where the upper horizontal ``attaching'' map takes $-1 \times F \mapsto F$ and $1 \times F\mapsto \hat t F$, where $\hat t \in \Z \subset \go$ is a generator.  Then $E_{\cF_0} = \R$, $E_{\cF_0}/\go = S^1$, and there is a Wang type long exact sequence
$$
\dots \to  K_q(RF) \xrightarrow{1 - \alpha} K_q(RF) \to H^{\go}_q(E_{\cF_0};\bfK)         \to \cdots
$$
Hence all we really need to do is to identify the map $H_q^{\go}(E_{\cF_0};\bfK) \to H_q^{\go}(E_{\all};\bfK) = K_q(R\go)$ with the split injection implicit in Waldhausen's work.  

For a $G$-CW-complex $X$, let 
$$
\bfK_{\%}(X) = \map_G(-,X)_+ \wedge_{\Or G} \bfK(-).
$$
This is the spectrum whose homotopy groups are $H_q^G(X;\bfK)$.  Since $\mor_{\Or \go}(\go/H,\go/F) = \map_{\go}(\go/H,\go/F)$ by definition,
Yoneda's lemma allows us to identify $\bfK_{\%}$ (with $G = \go$) applied to the square \eqref{squareone} with the commutative diagram of spectra
\begin{equation} \label{squaretwo}
\begin{diagram}
\node{\bfK(\go/F) \vee \bfK(\go/F)} \arrow{e} \arrow{s} \node{\bfK(\go/F)} \arrow{s} \\
\node{I_+ \wedge \bfK(\go/F)} \arrow{e} \node{\bfK_{\%}(E_{\cF_0})}
\end{diagram}
\end{equation}
This is a pushout diagram by \cite[Lemma 6.1]{DL98}.

Waldhausen gives a homotopy cartesian square 
\begin{equation} \label{squarethree}
\begin{diagram}
\node{\bfnil(RF)} \arrow{e} \arrow{s} \node{\bfK(RF)} \arrow{s} \\
\node{\bfK(RF)} \arrow{e} \node{\bfK(R\go)}
\end{diagram}
\end{equation}
(see Bartels-L\"uck \cite[Theorem 10.6]{BL} for the nonconnective version) and a split injection (up to homotopy)
$$
\bfK (RF)\vee \bfK (RF) \to \bfnil(R\go)
$$
with the homotopy groups of the cofiber being\linebreak$NK_{*+1}(RF,\alpha) \oplus NK_{*+1}(RF,\alpha^{-1})$.  Furthermore, Waldhausen shows that the composite of boundary map in the homotopy exact sequence of the square \eqref{squarethree} with the projection on the $NK$-groups 
$$
K_q(R\go) \to NK_q(RF,\alpha) \oplus NK_q(RF,\alpha^{-1})
$$
is a split surjection.

The square \eqref{squaretwo} maps to the square \eqref{squarethree}\footnote{There is a subtle point here. Waldhausen (see also \cite[Theorem 10.6]{BL}) shows that (\ref{squarethree}) is homotopy cartesian with respect to a certain natural transformation between the two functors corresponding to the two different ways from going to the upper left to the lower right.  However, this natural transformation involves the nilpotent structure and is the identity on the image of (\ref{squaretwo}).}.   Tracing through the above (examine the lower right corners!) gives a split short exact sequence 
$$
0 \to H_q^{\go}(E_{\cF_0};\bfK) \to H_q^{\go}(E_{\all};\bfK) \to NK_q(RF,\alpha) \oplus NK_q(RF,\alpha^{-1}) \to 0
$$
This gives a proof of the $\go$-part of the lemma.

  The proof of the $\g$-part of the lemma is quite similar.  Here we will only note that $E_{\cF}$ is constructed as a pushout
$$
\begin{diagram}
\node{S^0 \times \g/F} \arrow{e} \arrow{s} \node{\g/G_0 \amalg \g/G_1} \arrow{s}\\
\node{D^1 \times \g/F} \arrow{e} \node{E_{\cF}}
\end{diagram}
$$
and that $E_{\cF} = \R$ and $E_{\cF}/\g = [0,1/2]$.
\end{proof}

\section{Proof of Theorem \ref{Zn}}

Recall the statement of Theorem \ref{Zn}.

\begin{theorem1} 
 Let $\cM$ be the set of maximal cyclic subgroups of $\Z^n$.
$$
H^{\Z^n}_q(E_{\vc}, E_{1};\bfK)  
 \cong  \bigoplus_{C \in \cM}  ~ \bigoplus_{n-1 \geq i \geq 0} 2\binom{n-1}{i} NK_{q-i}R.
$$
\end{theorem1}

\begin{proof}
We make use of particular models for $E_1\Z^n = E_{\fin}\Z^n$ and $E_{\vc}\Z^n$, ensuring that they are $\Z^n$-CW-complexes.   Enumerate
 the maximal cyclic subgroups of $\Z^n$ as $\cM = \{C_0, C_1, C_2, \dots \}$.  Let 
$$
E_{1} = \R^n \times [0,\infty) \times I
$$ and define $E_{\vc}$ as the pushout
$$\begin{diagram}
\node{\coprod_{j=0}^\infty \R^n} \arrow{e} \arrow{s} \node{\coprod_{j=0}^\infty \R^n/(C_j \otimes \R)}\arrow{s}\\
\node{E_{1}} \arrow{e} \node{E_{\vc}}
\end{diagram}$$
where the $j$-th copy of $\R^n$ is identified with $\R^n \times \{j\} \times \{1\} \subset E_{1}$ and the $\Z^n$-actions on all spaces are induced by the translation action of $\Z^n$ on $\R^n$ and the trivial action on $[0,\infty) \times I$.

Let $\Z^{n-1}_j \subset \Z^n$ denote a subgroup so that $\Z^n = \Z^{n-1}_j \oplus C_j$.  Let   $\R_j = C_j \otimes \R \subset \R^n$ and $\R^{n-1}_j = \Z^{n-1}_j \otimes \R \subset \R^n$.  Then

\begin{align*}
H^{\Z^n}_q(E_{\vc},E_{1}; \bfK) & \xleftarrow{\cong} \bigoplus_j H^{\Z^n}_q(\R^n/\R_j,\R^n;\bfK)\\
& \cong \bigoplus_j \bigoplus_i H_i(\R^{n-1}_j/\Z^{n-1}_j) \otimes H^{C_j}_{q-i}(*,\R_j;\bfK) \\
&\cong  \bigoplus_{j} \bigoplus_{n-1 \geq i \geq 0} \binom{n-1}{i} H^{\Z}_{q-i}(*,\R ;\bfK)\\
& \cong \bigoplus_{j} \bigoplus_{n-1 \geq i \geq 0} \binom{n-1}{i} (NK_{q-i}R \oplus NK_{q-i}R),
\end{align*}
where the first isomorphism follows from the excision and disjoint union axioms, the second follows from the Atiyah-Hirzebruch Spectral Sequence \cite[Theorem 4.7]{DL98}
which collapses at $E_2$, the third from the homology of the torus, and the last from Lemma \ref{dlnil} with $\go = \Z$.
\end{proof}

An alternative method of computing $H^{\Z^n}_q(\R^n/\R_j,\R^n;\bfK)$ is to use the method of proof of Theorem \ref{infinite_dihedral} below.

\begin{remark} \label{precise}
The purpose of this remark is to make the map underlying the isomorphism of Theorem \ref{Zn} as explicit as possible.

For a maximal cyclic subgroup $C$ of $\Z^n$, let $ \csub$ be the family of subgroups of $\Z^n$ consisting of the subgroups of $C$.  Note that $E_{\csub}\Z^n = E(\Z^n/C)$ as $\Z^n$ spaces.

For an infinite cyclic group $C$ with generator $t$, there is a map 
$$
2NK_j(R) = N_jKR \oplus  N_jKR \to K_jRC
$$
induced by the two ring maps $R[t] \to RC$ given by $t \mapsto t$ and $t \mapsto t^{-1}$.

The $C$-component of the  isomorphism in Theorem \ref{Zn} is the composite of the maps
\begin{align*}
\bigoplus_i\binom{n-1}{i} 2NK_{q-i}R & \to \bigoplus_i\binom{n-1}{i} K_{q-i}RC \\
& \cong H_q(B(\Z^n/C); \bfK(RC))\\
& = H^{\Z^n}_q(E_{\csub};\bfK)\\
& \to H^{\Z^n}_q(E_{\vc};\bfK)\\
& \to H^{\Z^n}_q(E_{\vc},E_1;\bfK)
\end{align*}
The only map which is not explicit is the isomorphism $\cong$.  This only depends on an identification of $\Z^n/C$ with $\Z^{n-1}$ and the axioms of a generalized homology theory.

Here note for that for a generalized homology theory $\cH$, there is a canonical identification
\begin{equation}\label{circle}
\cH_q(S^1) = \cH_q(\pt) \oplus \cH_{q-1}(\pt)
\end{equation}
and hence a similar identification $\cH_q(T^n) = \bigoplus \binom{n}{i} \cH_{q-i}(\pt)$.  The identification \ref{circle} uses the fact that a circle has a point as a retract and the isomorphisms
$$
\cH_q(S^1,\pt) \xleftarrow{\cong} \cH_q(D^1,S^0)  \xrightarrow{\cong} \cH_{q-1}(S^0,\{-1\})  \xleftarrow{\cong} \cH_{q-1}(\{+1\}).
$$
\end{remark}

\begin{remark}
The group $\Z^n$ satisfies the property that every virtually cyclic subgroup is contained in a unique maximal virtual cyclic subgroup.  For such a group $G$, L\"uck-Weiermann \cite[Section 6]{LW} have shown $H^G_*(E_{\vc}G,E_{\fin}G;\bfK) \cong \bigoplus_{C \in \cM} H^C_*(E_{\vc}C,E_{\fin}C;\bfK)$ where $\cM$ is a set of representatives for the conjugacy classes of maximal virtually cyclic subgroups of $G$.   This also can be analyzed using the techniques of Davis-L\"uck \cite[Section 4]{DL03}.
\end{remark}






\section{Proof of Theorem \ref{infinite_dihedral}}

Recall the statement of Theorem \ref{infinite_dihedral}.

\begin{theorem5}
Let $\varphi : \g \to D_{\infty}$ be an epimorphism of groups.  Let $F = \ker \varphi$. Choose $\hat t \in \g$ so that $\varphi(\hat t) = ab$.    Let $\alpha$ be the automorphism of $RF$ given by conjugation by $\hat t$.  Then
$$
H^\g_q(E_{\varphi^*\cfin},E_{\varphi^*\fin}; \bfK) \cong NK_q(RF,\alpha).
$$
\end{theorem5}

\begin{proof}

Let $\varphi : \g \to D_{\infty}$ be an epimorphism.  The infinite dihedral group $D_{\infty} = \langle a, b ~|~ a^2 = 1 = b^2\rangle$ acts on $\R$ via $a(x) = -x$ and $b(1/2 + x) = 1/2 - x$ as well as on $S^\infty$ via $a(x) = -x = b(x)$.  Then one can choose models
\begin{align*}
E_{\varphi^*\fin}\g &= E_{\fin}D_\infty = \R \\
E_{\varphi^*{\cfin}}\g & = E_{\cfin}D_\infty = S^{\infty} * \R.
\end{align*}
This join model was pointed out by Ian Hambleton.  

Hence 
$$
H^\g_q(E_{\varphi^*\cfin}\g,E_{\varphi^*\fin}\g; \bfK) = H^\g_q(\R,S^{\infty} * \R; \bfK)
$$
To compute this relative homology we make a categorical diversion. 

For a subgroup $H$ of $G$ and a family $\cF$ of subgroups of $G$, let $\cF \cap H$ denote the family of subgroups of $H$ given by $\{F \in \cF : F \text{ is a subgroup of }H\}$.

\begin{lemma} \label{spectra} Let $\cF$ be a family of subgroups of $G$.  Let $\bfE$ be an \linebreak$\Or G\text{-spectrum}$, that is, a functor $\bfE : \Or G \to \spectra$.  
 There is an $\Or G$-spectrum $\bfE_{\cF}$ and 
a map of $\Or G\text{-}\spectra$  $\bfE_{\cF} \to \bfE$
 satisfying the following two properties:
\begin{enumerate}
\item For any subgroup $H$ of $G$, one can identify the change of spectra map
$$
H_q^G(G/H; \bfE_{\cF}) \to H_q^G(G/H; \bfE)
$$
with  the change of space map
$$
H^H_q(E_{\cF \cap H}H;j^*\bfE) \to H^H_q(\pt;j^*\bfE) = \pi_q\bfE(H/H).
$$
By $j^*\bfE$ we mean the composite 
$\Or H \xrightarrow{j} \Or G \xrightarrow{\bfE} \spectra$ where $j(H/K) = G/K$.

\item  For any family $\cG$ containing $\cF$, one can identify the change of spectra map 
$$
H^G_q(E_{\cG}G; \bfE_{\cF}) \to H^G_q(E_{\cG}G; \bfE)
$$
with the change of space map
$$
H^G_q(E_{\cF}G; \bfE) \to H^G_q(E_{\cG}G; \bfE).
$$
\end{enumerate}
\end{lemma}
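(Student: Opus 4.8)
The plan is to build $\bfE_\cF$ by ``crossing with the classifying space $E_\cF G$'' and then to verify the two properties by coend manipulations of the sort already used in the proof of Lemma~\ref{dlnil}. For a $G$-CW-complex $X$ write $\bfE_\%(X) = \map_G(-,X)_+\wedge_{\Or G}\bfE(-)$; this is functorial in $X$, and the $G$-space $G/H\times E_\cF G$ depends (covariantly) functorially on the object $G/H$ of $\Or G$, so setting $\bfE_\cF(G/H) = \bfE_\%(G/H\times E_\cF G)$ defines an $\Or G$-spectrum. The projection $G/H\times E_\cF G\to G/H$ is natural and induces $\bfE_\cF\to\bfE_\%(-)$, and Yoneda's lemma gives a natural equivalence $\bfE_\%(G/H)\simeq\bfE(G/H)$ (exactly as invoked for the square \eqref{squaretwo}); composing, I obtain the desired map $\bfE_\cF\to\bfE$ of $\Or G$-spectra.

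Property (1) I would read off directly from the definition. By Yoneda, $H_q^G(G/H;\bfE_\cF) = \pi_q\bfE_\cF(G/H) = \pi_q\bfE_\%(G/H\times E_\cF G) = H^G_q(G/H\times E_\cF G;\bfE)$. The shear homeomorphism identifies $G/H\times E_\cF G$ with the induced $G$-space $G\times_H(\operatorname{res}^G_H E_\cF G)$, and the standard induction isomorphism for equivariant homology (see \cite[\S 4]{DL98}) turns $H^G_q(G\times_H Y;\bfE)$ into $H^H_q(Y;j^*\bfE)$. Since a subgroup $K$ of $H$ lies in $\cF$ precisely when it lies in $\cF\cap H$, the $H$-space $\operatorname{res}^G_H E_\cF G$ is a model for $E_{\cF\cap H}H$; and the projection $G/H\times E_\cF G\to G/H$ corresponds, under these identifications, to $E_{\cF\cap H}H\to\pt$. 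This is exactly the asserted identification, using $H^H_q(\pt;j^*\bfE) = \pi_q j^*\bfE(H/H) = \pi_q\bfE(G/H)$.

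The main obstacle is the lemma underlying property (2): for every $G$-CW-complex $X$ there should be a natural equivalence $(\bfE_\cF)_\%(X)\simeq\bfE_\%(X\times E_\cF G)$ compatible with the maps down to $\bfE_\%(X)$. I expect this to be a Fubini computation for coends. Writing $(\bfE_\cF)_\%(X) = \map_G(-,X)_+\wedge_{\Or G}\bigl(\map_G(-',-\times E_\cF G)_+\wedge_{\Or G}\bfE(-')\bigr)$, I would interchange the two coends, use $\map_G(G/L,G/K\times E_\cF G) = \map_G(G/L,G/K)\times(E_\cF G)^L$ to split off the $(E_\cF G)^L_+$ factor, and then collapse the remaining inner coend over $G/K$ by the density (co-Yoneda) formula for the contravariant functor $\map_G(-,X)$, leaving $\int^{G/L}\map_G(G/L,X\times E_\cF G)_+\wedge\bfE(G/L) = \bfE_\%(X\times E_\cF G)$. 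Keeping track of the variance of each mapping-space functor (and of the cofibrancy needed to read the coends homotopy-invariantly) is the one delicate point; the rest is formal, and this step is presumably already implicit in \cite[\S\S 4,7]{DL98}.

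Granting that, property (2) follows by taking $X = E_\cG G$: one gets $H^G_q(E_\cG G;\bfE_\cF)\cong H^G_q(E_\cG G\times E_\cF G;\bfE)$, and since $\cF\subseteq\cG$ the projection $E_\cG G\times E_\cF G\to E_\cF G$ is a $G$-homotopy equivalence --- check fixed points: for $K\in\cF$ both factors are contractible, and for $K\notin\cF$ the factor $(E_\cF G)^K$ is empty --- so $E_\cG G\times E_\cF G$ is itself a model for $E_\cF G$ and $H^G_q(E_\cG G;\bfE_\cF)\cong H^G_q(E_\cF G;\bfE)$. The comparison map to $H^G_q(E_\cG G;\bfE)$ is induced by the other projection $E_\cG G\times E_\cF G\to E_\cG G$, which under this equivalence becomes the canonical map $E_\cF G\to E_\cG G$; hence it is identified with $H^G_q(E_\cF G;\bfE)\to H^G_q(E_\cG G;\bfE)$, as required.
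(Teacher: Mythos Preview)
Your proof is correct, and it takes a genuinely different route from the paper's.

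The paper deduces Lemma~\ref{spectra} from a more general categorical statement (Lemma~\ref{abstract}) about an arbitrary functor $F:\cB\to\cC$: it defines $\bfE_F(c)=E(F\da c)_+\wedge_{F\da c}\Delta_c^*\bfE$ via the overcategory $F\da c$, proves three abstract properties (including $H^{\cC}_q(X;\bfE_F)\cong H^{\cB}_q(F^*X;F^*\bfE)$), and then specializes to the inclusion $\Or(G,\cF)\hookrightarrow\Or G$, using the identification $F\da(G/H)\cong\Or(H,\cF\cap H)$. Establishing part~(3) of that abstract lemma requires checking that a certain triangle of spectra commutes up to homotopy, which the paper carries out by an explicit simplicial homotopy.

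Your construction $\bfE_\cF(G/H)=\bfE_\%(G/H\times E_\cF G)$ is the same spectrum---the paper notes $\bfE_\cF(G/H)=\map_H(-,E_{\cF\cap H}H)_+\wedge_{\Or H}\bfE(-)$, which agrees with yours via the shear homeomorphism and induction---but you exploit the orbit-category setting directly. The Fubini/co-Yoneda computation for $(\bfE_\cF)_\%(X)\cong\bfE_\%(X\times E_\cF G)$ is valid at the point-set level (for $X$ a $G$-CW-complex both sides are the genuine smash products, no approximation needed), and the identification of $E_\cG G\times E_\cF G$ with a model for $E_\cF G$ then gives property~(2) without any homotopy-commutativity argument. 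What you lose is the extra generality of Lemma~\ref{abstract}; what you gain is a shorter, more transparent argument that avoids the simplicial detour.
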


We will prove a generalization of this lemma later, but for now note that
$$
\bfE_{\cF}(G/H) = \map_H(-,E_{\cF\cap H}H)_+   \wedge_{\Or H} \bfE(-).
$$

Let $\varphi: \g \to D_{\infty}$ be an epimorphism.  Let $\g = G_0 *_F G_1$ be the corresponding amalgamated product decomposition.  Then $\g_0 = \varphi^{-1}(\Z) = F \rtimes_{\alpha} \Z$ is an index 2 subgroup.  Conjugation by any element $g \in \g - \go$ leaves
$$
N_+K_q(RF,\alpha) \oplus N_-K_q(RF,\alpha^{-1}) \subset K_q(R\go)
$$ invariant and interchanges the two summands.  

Let $\bfN$ be the $\Or \g$-spectrum given as the cofiber of 
$$
\bfK_{\varphi^*\fin} \to \bfK.
$$
(The letter $\bfN$ is used to remind the reader of Nil.  A similar construction was in a preprint of Frank Quinn.)  For $H \in \varphi^*\fin$, note $E_{\varphi^*\fin \cap H}H = \pt$, so by Lemma \ref{spectra}(1),
$$
\pi_q\bfN(\g/H) = 0.
$$
By Lemma \ref{spectra}(1) and Lemma \ref{dlnil},
\begin{align*}
\pi_q\bfN(\g/\go) & = H^\g_q(E_{\all}\g, E_{\varphi^*\fin}\g; \bfK)\\
&= NK_q(RF,\alpha) \oplus  NK_q(RF,\alpha^{-1})\\
\end{align*}

Now finally we can prove Theorem \ref{infinite_dihedral}.
\begin{align*}
H^\g_q(E_{\varphi^*\cfin}\g,E_{\varphi^*\fin}\g; \bfK) & = H^\g_q(E_{\varphi^*\cfin}\g; \bfN)\\
& = H^\g_q(S^{\infty}*\R; \bfN)\\
& = H_q^\g(S^\infty; \bfN)\\
&= H_0(\Z_2; NK_q(RF,\alpha) \oplus NK_q(RF,\alpha^{-1}))\\
& = NK_q(RF,\alpha)
\end{align*}

We label the Equalities 1, 2, 3, 4, and 5.  Equality 1 follows from Lemma \ref{spectra}(2).  Equality 2 follows from the given model of the classifying space.  

For $x \in S^{\infty} * \R - \R$, the isotropy subgroup $\g_x \in \varphi^*\fin\g$, so $\pi_q\bfN(\g/\g_x) = 0$.  Thus the $E_2$-terms of the spectral sequences of the skeleta fitrations (see \cite[Theorem 4.7]{DL98})  converging to $H_q^\g(S^\infty; \bfN)$ and $H^\g_q(S^{\infty}*\R; \bfN)$ agree, so Equality 3 follows.  In fact, since the isotropy is constant, the $E_2$-term is
$$
E^2_{i,j} = H_i(\R P^{\infty}; NK_j(RF,\alpha) \oplus NK_j(RF,\alpha^{-1})),
$$
and computing with local coefficients, Equalities 4 and 5 follow.  

This completes the proof of Theorem \ref{infinite_dihedral}.

\end{proof}

\subsubsection{A categorical diversion}

We need to review some of the material in \cite{DL98} to state and prove the next lemma and deduce Lemma \ref{spectra}.  Let $\cC$ be a category.  A {\em $\cC$-space} is a functor $X : \cC \to \spaces$; a {\em $\cC$-spectrum} is a functor $\bfE: \cC \to \spectra$.    A map of $\cC$-spaces or $\cC$-spectra is a natural transformation.  A homotopy of maps of $\cC$-spaces is a map $X \times I \to Y$.  It is then clear what a homotopy equivalence of $\cC$-spaces is.  A {\em weak homotopy equivalence} $X \to Y$ of $\cC$-spaces is a map which induces a weak homotopy equivalence $X(c) \to Y(c)$ for all objects $c$ of $\cC$.  

A $\cC$-CW-complex is a  $\cC$-space $X$ together with a filtration 
$$
X^0 \subset X^1 \subset X^2 \subset \cdots \subset X
$$
satisfying certain properties; the precise definition is given in \cite{DL98}.  For example, if $Y$ is a $G$-CW-complex, then $\map_G(-,Y)$ is a $\Or G$-CW-complex.  A {\em $\cC$-CW approximation} is a weak homotopy equivalence $X' \to X$ where $X'$ is a $\cC$-CW-complex.  $\cC$-CW-approximations exist and are unique up to homotopy.

Let $X : \cC^{\op} \to \spaces$ be a $\cC^{\op}$-space and $\bfE : \cC \to \spectra$ be a $\cC$-spectrum.  One can form the balanced product 
$$
X_+ \wedge_{\cC} \bfE;
$$
this is a spectrum.  Let $X' \to X$ be a $\cC^{\op}$-CW-approximation.  One defines 
$$
H_q^{\cC}(X;\bfE) = \pi_q(X^\prime_+ \wedge_{\cC} \bfE).
$$
This generalized homology theory satisfies excision, is invariant under weak homotopy equivalence, has ``coefficients''
$$
H_q^{\cC}(\mor_{\cC}(-,c);\bfE) = \pi_q(\bfE(c))
$$ 
and satisfies 
$$
H_q^{\cC}(*;\bfE) = \pi_q(\hocolim_{\cC} \bfE),
$$
where $*$ denotes a $\cC$-space so that $*(c)$ is a point for all objects $c$. 

To explicate this last point, recall there is a $\cC^{\op}$-CW-approximation $E\cC \to *$, functorial in $\cC$.  Let $B\cC$ be the classifying space of a category $\cC$; it is the geometric realization of the simplicial set $\cN_\bullet \cC$ whose $p$-simplicies are sequences of composable morphisms 
$$
c_0 \to c_1 \to \cdots \to c_p.
$$
Fixing a object $c$ of $\cC$, define the {\em undercategory} $c \da \cC$.  An object in $c \da \cC$ is a morphism $\phi' : c \to c'$ in $\cC$.  A morphism $f$ from  $\phi' : c \to c'$ to $\phi'' : c \to c''$ is a morphism $f: c' \to c''$ satisfying $f \circ \phi' = \phi''$.  Then define the bar resolution model $E\cC(c) = B(c \da \cC)$.  Note $E\cC_+ \wedge_{\cC} \bfE$ is the usual definition of $\hocolim_{\cC} \bfE$.

Let $F : \cB \to \cC$ be a functor.  For a $\cC$-space $X$, define a $\cB$-space $F^*X(b) = X(F(b))$.  For a $\cB$-space $X$, define a $\cC$-space 
$$
F_*X(c) = \mor_{\cC}(F(-),c) \times_{\cB} X(-).
$$
There are similar definitions for spectra.  These constructions satisfy numerous adjoint properties.  If $X$ is a $\cB^{\op}$-space and $\bfE$ is a $\cC$-spectrum, there is a homeomorphism of spectra
$$
X_+ \wedge_{\cB} F^*\bfE \cong (F_*X)_+ \wedge_{\cC} \bfE,
$$
natural in $X$ and $\bfE$.  Similarly, if $X$ is a $\cB$-space and $Y$ is a $\cC$-space
$$
\map_{\cB}(X,F^*Y) \cong \map_{\cC}(F_*X, Y),
$$
natural in $X$ and $Y$.

Next we move on to assembly maps.  The functor $(b \da \cB) \to (F(b) \da \cC), \\\quad (\phi : b \to b') \mapsto (F(\phi) : F(b) \to F(b'))$ induces a map of $\cB$-spaces $E\cB \to F^*E\cC$, and hence, by the above adjoint property, a map of $\cC$-spaces
$$
F_*E\cB \to E\cC.
$$
We call this the {\em $F$-pre-assembly map}.  The composite 
$$
E\cB_+ \wedge_{\cB} F^*\bfE \cong F_*E\cB_+ \wedge_{\cC} \bfE \to E\cC_+ \wedge_{\cC} \bfE,
$$
as well as the induced map on homotopy groups
$$
H^{\cB}_q(*;F^*\bfE) \to H^{\cC}_q(*;\bfE),
$$
is called the {\em $(F,\bfE)$-assembly map}.

We need some more notation for the statement and proof of the following lemma.  Let $F : \cB \to \cC$ and $\bfE : \cC \to \spectra$ be functors.  For an object $c$ of $\cC$, define the overcategory  $F \da c$, whose objects are pairs $(b,\phi: F(b) \to c)$.  There is a commutative diagram of functors

$$
\begin{diagram}
\node{F \da c} \arrow{e,t}{F_c} \arrow{s,l}{P_c} \arrow{se,t}{\Delta_c} \node{\cC \da c} \arrow{s,r}{Q_c} \\
\node{\cB} \arrow{e,t}{F} \node{\cC} \arrow{e,t}{\bfE} \node{\spectra}
\end{diagram}
$$
where
\begin{align*}
P_c(b,\phi) &= b\\
F_c(b,\phi) & = \phi\\
Q_c(\phi: c' \to c) & = c'.
\end{align*}

\begin{lemma} \label{abstract}
Let $F : \cB \to \cC$ and $\bfE: \cC \to \spectra$ be functors.  There is a $\cC$-spectrum $\bfE_F : \cC \to \spectra$ with  $\bfE_F(c) = E(F\da c)_+ \wedge_{F\da c} \Delta_c^*\bfE$ and a map of $\cC$-spectra $\bfE_F \to \bfE$ satisfying the following properties:
\begin{enumerate}
\item For all objects $c$, the map $\pi_q\bfE_F(c) \to \pi_q\bfE(c)$ can be identified with the $(F_c,Q_c^*\bfE)$-assembly map
$$
H^{F\da c}_q(*; \Delta^*_c \bfE) \to H^{C\da c}_q(*; Q^*_c \bfE).
$$

\item For any $\cC$-space $X$, there is an isomorphism
$$
H^{\cC}_q(X; \bfE_F) \cong H^{\cB}_q(F^*X;F^*\bfE),
$$
natural in $X$ and $\bfE$.

\item  The change of spectrum map 
$$
H^{\cC}_q(*;\bfE_F) \to H^{\cC}_q(*;\bfE)
$$
can be identified with the $(F,\bfE)$-assembly map
$$
H^{\cB}_q(*;F^*\bfE) \to H^{\cC}_q(*;\bfE).
$$

\end{enumerate}
\end{lemma}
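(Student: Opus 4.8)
The plan is to recognize $\bfE_F$ as a model for the homotopy left Kan extension of $F^*\bfE$ along $F$, and to deduce all three properties from the adjointness of (derived) induction and restriction together with the homotopy-invariance properties of the balanced product $(-)_+\wedge_{\cC}(-)$ recorded in \cite{DL98}.

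First I would verify that $c\mapsto \bfE_F(c) = E(F\da c)_+ \wedge_{F\da c}\Delta_c^*\bfE$ is a functor $\cC\to\spectra$: a morphism $g:c\to c'$ induces a functor $F\da c\to F\da c'$, $(b,\phi)\mapsto(b,g\circ\phi)$, strictly compatible with $\Delta_c,\Delta_{c'}$ and with $F_c,Q_c,F_{c'},Q_{c'}$, and applying the functorial bar construction $E(-)$ and the balanced product --- equivalently, using functoriality of the homotopy colimit in the indexing category --- produces the structure maps, with associativity of composition giving functoriality. To construct the natural transformation $\bfE_F\to\bfE$, note $\Delta_c = Q_c\circ F_c$ gives $\Delta_c^*\bfE = F_c^*(Q_c^*\bfE)$, so the $(F_c,Q_c^*\bfE)$-assembly map is a map $\bfE_F(c) = \hocolim_{F\da c}\Delta_c^*\bfE \to \hocolim_{\cC\da c}Q_c^*\bfE$; since $\cC\da c$ has terminal object $\id_c$, the inclusion of $\id_c$ is homotopy cofinal and gives a weak equivalence $\hocolim_{\cC\da c}Q_c^*\bfE \xrightarrow{\simeq}(Q_c^*\bfE)(\id_c) = \bfE(c)$, and the composite is the desired map. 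Its naturality in $c$ follows from naturality of assembly maps and of cofinal-inclusion equivalences. Property (1) is then immediate, being the definition of this map together with the identification $H^{\cC\da c}_q(*;Q_c^*\bfE)\cong\pi_q\bfE(c)$ just used.

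The main content is property (2). Since the bar resolution $E(F\da c)$ is a functorial free resolution of the one-point diagram, $\bfE_F(c)$ is a genuine homotopy colimit, so $\bfE_F$ is a model for the homotopy left Kan extension of $F^*\bfE$ along $F$; as the homotopy left Kan extension agrees with the ordinary left Kan extension $F_*$ on a CW (hence cofibrant) diagram, I would fix a $\cB$-CW-approximation $\bfM\to F^*\bfE$ and exhibit a natural weak equivalence $\bfE_F\simeq F_*\bfM$, where $F_*\bfM(c) = \mor_{\cC}(F(-),c)_+\wedge_{\cB}\bfM(-)$. A co-Yoneda computation --- the adjoint property of $F_*$ and $F^*$ at the level of balanced products --- then gives a natural homeomorphism $F^*X_+\wedge_{\cB}\bfM \cong X_+\wedge_{\cC}F_*\bfM$ for every $\cC^{\op}$-space $X$. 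Taking $X$ to be a $\cC^{\op}$-CW-approximation and using that $\bfM$ is a CW-spectrum, so that $(-)_+\wedge_{\cB}\bfM$ preserves weak equivalences and hence $\pi_q(F^*X_+\wedge_{\cB}\bfM) = H^{\cB}_q(F^*X;\bfM) = H^{\cB}_q(F^*X;F^*\bfE)$, yields property (2), with naturality in $X$ and $\bfE$ visible throughout. Property (3) is then the case $X=*$ of property (2), once one checks that the resulting isomorphism $H^{\cC}_q(*;\bfE_F)\cong H^{\cB}_q(*;F^*\bfE)$ intertwines the change-of-spectrum map induced by $\bfE_F\to\bfE$ with the $(F,\bfE)$-assembly map; this reduces to comparing the construction of $\bfE_F\to\bfE$ above with the pre-assembly map $F_*E\cB\to E\cC$, both being built from the comparison functors $(b\da\cB)\to(F(b)\da\cC)$, under the weak equivalence $E\cB\to F^*E\cC$.

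I expect the only real obstacle to be the point-set bookkeeping in property (2): keeping careful track of CW-approximations and cofibrancy so that each balanced product computes the intended derived functor, and so that the identification $\bfE_F\simeq F_*\bfM$ and the adjunction homeomorphisms are genuinely natural in both variables. Everything else is formal manipulation of homotopy colimits, Kan extensions, and the adjunctions already recorded in the excerpt.
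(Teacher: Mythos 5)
Your construction of $\bfE_F$, the map $\bfE_F \to \bfE$ via the $(F_c,Q_c^*\bfE)$-assembly map followed by the equivalence coming from the terminal object $\id_c$ of $\cC\da c$, and hence property (1), all match the paper. For (2) you take a different route: the paper never CW-approximates the coefficient spectrum; it works with $F^*\bfE$ itself, using that the adjoint map $P_{c*}E(F\da c)\to \mor_{\cC}(F(-),c)$ is a weak equivalence of $\cB^{\op}$-spaces and that $X\times_{\cC}P_{c*}E(F\da c)$ has the homotopy type of a $\cB^{\op}$-CW-complex (both quoted from the $p$-chain paper \cite{DL03}), so that the space-level map is a CW-approximation of $F^*X$ and the adjunction homeomorphism finishes the argument. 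Your version instead cofibrantly replaces the $\cB$-spectrum $F^*\bfE$ by $\bfM$ and invokes that $(-)_+\wedge_{\cB}\bfM$ preserves weak equivalences; that is plausible in a model-category treatment of diagram spectra, but it is machinery not set up in \cite{DL98} (where homotopy invariance is proved in the space variable, for arbitrary coefficient $\cC$-spectra), so you would have to supply it, and you would also need the zigzag $\bfE_F \simeq F_*\bfM$ to be natural enough to compare with the map $\bfE_F\to\bfE$ later. This is extra work, not an error.

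The genuine gap is in (3). You assert that it ``reduces to comparing the construction of $\bfE_F\to\bfE$ with the pre-assembly map $F_*E\cB\to E\cC$,'' as if the resulting triangle
of maps of spectra with corners $E\cC_+\wedge_{\cC}\bfE_F$, $E\cB_+\wedge_{\cB}F^*\bfE$ and $E\cC_+\wedge_{\cC}\bfE$ commuted on the nose or by inspection. It does not: the two composites differ at the point-set level, and the paper records explicitly (crediting Mike Mandell) that the triangle commutes only up to homotopy. Producing that homotopy is the main content of the paper's proof of (3): it realizes the three corners as (bi)simplicial spectra built from the nerves of $\cB$, $\cC$ and the two-sided diagram of composable strings $(b_0\to\cdots\to b_p,\ F(b_p)\to c_0\to\cdots\to c_p)$, writes down the three simplicial maps $f$, $g$, $h$, and constructs an explicit simplicial homotopy $H_i$ between $f$ and $g\circ h$ (inserting the morphism $F(b_i)\to c_i$ at the $i$-th spot). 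Without this construction --- or some substitute argument, e.g.\ a model-categorical uniqueness statement identifying both composites with a derived counit, which you do not give --- the identification of the change-of-spectrum map with the $(F,\bfE)$-assembly map is unproved, and (3) is precisely the property that the application (Lemma \ref{spectra}(2), and through it Theorem \ref{infinite_dihedral}) depends on. So you should either carry out the simplicial homotopy or give a genuine homotopy-coherence argument here rather than treating it as bookkeeping.
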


\begin{proof}[Proof of Lemma \ref{spectra} assuming Lemma \ref{abstract}]
Let $\cF$ be a family of subgroups of $G$.  Let $\Or(G,\cF)$ be the {\em restricted orbit category}; objects are $G$-sets $G/H$ with $H \in \cF$ and morphisms are $G$-maps.  Note that $\map_G(-,E_{\cF}G)$ is a model for $E\Or(G,\cF)$.

Lemma \ref{spectra}(1) follows immediately from applying Lemma \ref{abstract}(1) to the inclusion functor $F : \Or(G,\cF) \to \Or(G)$ and setting $\bfE_{\cF} = \bfE_F$, after noting the identification of categories 
\begin{align*}
\Or(H,\cF \cap H) & = F \da (G/H)\\
G/K & \mapsto (G/K \to G/H, \gamma K \mapsto \gamma H)
\end{align*}

For families $\cF \subset \cG$ of subgroups of $G$, let $F$ and $I$ be the inclusion functors
$$
\Or(G,\cF) \xrightarrow{F} \Or(G,\cG) \xrightarrow{I} \Or G.
$$
Lemma \ref{spectra}(2) follows from applying Lemma \ref{abstract}(3) to the functor $F$ and setting $\bfE_{\cF} = \bfE_{I \circ F}$.  We won't explicitly carry out the straightforward but tedious identifications of the maps in Lemma \ref{abstract}(3)  with the maps in Lemma \ref{spectra}(2), but will mention two identifications used:
\begin{align*}
\pi_q(\map_G(-,E_{\cG}G)_+ \wedge_{\Or(G,\cG)} I^*\bfE) & =
\pi_q(\map_G(-,E_{\cG}G)_+ \wedge_{\Or G } I^*\bfE)\\
(I^*{\bfE})_F & = I^*(\bfE_{I \circ F})
\end{align*}
\end{proof}

\begin{proof}[Proof of Lemma \ref{abstract}]
Define $\bfE_F(c) = E(F\da c)_+ \wedge_{F\da c} \Delta_c^*\bfE$.  For a morphism $f : c \to c'$ the map $\bfE_F(f) : \bfE_F(c) \to \bfE_F(c')$ is given by the $((F\da f) : F \da c \to F \da c', \Delta^*_{c'}\bfE)$-assembly map.  This defines $\bfE_F$.

The map $\bfE_F(c) \to \bfE(c)$ is given by the composite of the $(F_c, Q_c^*\bfE)$-assembly map
$$
E(F\da c)_+ \wedge_{F \da c} \Delta^*_c\bfE \to E(\cC \da c)_+ \wedge_{\cC  \da c} Q^*_c\bfE
$$
and the homotopy equivalence
$$
E(\cC \da c)_+ \wedge_{\cC  \da c} Q^*_c\bfE  \xrightarrow{\sim} \mor_{\cC \da c}(-,\id_c) \wedge_{\cC \da c} Q^*_c\bfE = Q^*_c(\bfE)(\id_c) = \bfE(c)
$$
which happens since $\id_c$ is a final object of $\cC \da c$.  This defines the map $\bfE_F \to \bfE$ and justifies (1).

We will next prove (2).  
There is a map of $(F \da c)$-spaces
\begin{align*}
E(F \da c) &\to P_c^*\mor_{\cC}(F(-),c)\\
x \in E(F\da c)(b,F(b) \to c) & \mapsto (F(b) \to c) \in P_c^*\mor_{\cC}(F(-),c)(b,F(b) \to c) \\
& \qquad= \mor_{\cC}(F(b),c)
\end{align*}
The adjoint of this map is a map
$$
P_{c*}E(F\da c) \to \mor_{\cC}(F(-),c),
$$
which, according to \cite[p. 91]{DL03}, is a weak homotopy equivalence of $\cB^{\op}$-spaces.

To prove (2), we may assume that $X$ is a $\cC^{\op}$-CW-complex, since both sides of (2) are invariant under weak homotopy equivalence.  According to Lemma 3.5 of \cite{DL03}, the domain of 
$$
X \times_{\cC} P_{c*}E(F\da c) \to X \times_{\cC} \mor_{\cC}(F(-),c) = F^*X
$$
has the homotopy type of a $\cB^{\op}$-CW-complex.  Hence the above map is a homotopy $\cB^{\op}$-CW-approximation.
Thus
$$
H^{\cB}_q(F^*X; F^*\bfE) = \pi_q((X \times_{\cC} P_{c*}E(F \da c))_+ \wedge_{\cB} F^*\bfE)
$$
Note
\begin{align*}
X_+ \wedge_{\cC} \bfE_F(c) & = X_+ \wedge_{\cC} (E(F\da c)_+ \wedge_{F \da c} P_{c}^*F^*\bfE)  \\
& = X_+ \wedge_{\cC} (P_{c*}E(F\da c)_+ \wedge_{\cB} F^*\bfE) \\
& = (X \times_{\cC} P_{c*}E(F\da c))_+ \wedge_{\cB} F^*\bfE 
\end{align*}
(2) follows.

We will construct maps of spectra
\begin{equation}
\begin{diagram}\label{triangle}
\node{E\cC_+ \wedge_\cC (\bfE(F\da c)_+ \wedge_{F\da c} \Delta_c^* \bfE)} \arrow{s} \arrow{e} \node{E\cC_+ \wedge_\cC \bfE}\\
\node{E\cB_+ \wedge_\cB F^*\bfE} \arrow{ne}
\end{diagram}
\end{equation}
 The horizontal map is defined given by the map of $\cC$-spectra $\bfE_F \to \bfE$ defined above.     The diagonal map is the $(F,\bfE)$-assembly map.  
The vertical homotopy equivalence was essentially defined in the proof of (2) with $X = E \cC$:  Recall 
$$
E\cC_+ \wedge_\cC (\bfE(F\da c)_+ \wedge_{F\da c} \Delta_c^* \bfE) = 
(E\cC \times_{\cC} P_{c*}E(F \da c))_+ \wedge_{\cB} F^*\bfE
$$
Recall also $E\cC \times_{\cC} P_{c*}E(F \da c) $ has the homotopy type of a $\cB^{\op}$-CW-complex and is contractible at each object, so there exists a $\cB^{\op}$-map to $E\cB$, unique up to homotopy, which is a homotopy equivalence.  Then smash with $F^*\bfE$.

Mike Mandell pointed out that the triangle (\ref{triangle}) only commutes up to homotopy.  He also indicated the proof of homotopy commutativity given below.

The proof uses simplicial methods.  In fact, the triangle (\ref{triangle}) is the geometric realization of a triangle of maps of simplicial spectra
\begin{equation}
\begin{diagram}\label{simp_triangle}
\node{\bfX_{\bullet}} \arrow{s,l}{h} \arrow{e,t}{f} \node{\bfZ_{\bullet}}\\
\node{\bfY_{\bullet}} \arrow{ne,r}{g}
\end{diagram}
\end{equation}
Let 
$$
\bfZ_p = \bigvee_{\sigma\in \cN_p \cC} \bfE(\initial \sigma)
$$
where $\initial (c_0 \to \dots \to c_p) = c_0$.  Similarly, let
$$
\bfY_p = \bigvee_{\sigma\in \cN_p \cB} \bfE(F(\initial \sigma))
$$
The upper left hand corner of (\ref{triangle}) is the geometric realization of a bisimplicial spectrum.  Let $\cN F_{p,q}$ be the bisimplicial set with elements
$$
\tau = (\tau_1, \tau_2) = (b_0 \to \cdots \to b_p, F(b_p) \to c_0 \to \dots \to c_q),
$$
sequences of composable morphisms in $\cB$ and $\cC$.  Let 
$$
\bfX_{p,q} = \bigvee_{(\tau_1,\tau_2)\in \cN F_{p,q}} \bfE(F(\initial \tau_1))
$$
Let $\bfX_\bullet$ be the diagonal simplicial set $X_p = X_{p,p}$.  A fundamental fact \cite[p. 94]{Q} is that the geometric realizations of $\bfX_{\bullet}$ and $\bfX_{\bullet,\bullet}$ are homeomorphic.

The geometric realizations of $\bfX_\bullet, \bfY_\bullet$, and $\bfZ_\bullet$ are homeomorphic to the corners of the triangle (\ref{triangle}).

The map $f$ in (\ref{simp_triangle}) is defined by mapping the 
$$
(\tau_1,\tau_2) = (b_0 \to \cdots \to b_p, F(b_p) \to c_0 \to \cdots \to c_p)
$$
summand of $\bfX_p$ to the ($c_0 \to \cdots \to c_p$)-summand of $\bfZ_p$ using the map
$$
\bfE(F(b_0)) \to \bfE(c_0)
$$
induced by the composite 
$$
F(b_0) \to \cdots \to F(b_p) \to c_0.
$$
The map $h$ in (\ref{simp_triangle}) (using the obvious choice of a map of $\cB$-spaces $E\cC \times_{\cC} P_{c*}E(F\da c) \to E\cB$) maps the $(\tau_1,\tau_2)$-summand of $\bfX_p$ to the $\tau_1$-summand of $\bfY_p$ using the identity map
$$
\bfE(F(b_0)) \to \bfE(F(b_0)).
$$
The map $g$ in (\ref{simp_triangle}) maps the $(b_0 \to \cdots \to b_p)$-summand of $\bfY_p$ to the $(F(b_0) \to \cdots \to F(b_p))$-summand of $\bfZ_p$ using the identity map
$$
\bfE(F(b_0)) \to \bfE(F(b_0)).
$$

A simplicial homotopy between $f$ and $g \circ h$ is a sequence of maps of spectra
$$
H_i : \bfX_p \to \bfZ_{p+1} \qquad (i = 0, \dots , p)
$$
satisfying certain identities (see e.g.~\cite[Definition 5.1]{May}), including $\partial_0 H_0 = f$ and $\partial_{p+1}H_p = g \circ h$.  In our case, $H_i$ maps the 
$$
(b_0 \to \cdots \to b_p, F(b_p) \to c_0 \to \cdots \to c_p)
$$
summand of $\bfX_p$ to the 
$$
F(b_0) \to \cdots \to F(b_i) \to c_i \to \cdots \to c_p
$$
summand of $\bfZ_p$ using the identity map
$$
\bfE(F(b_0)) \to \bfE(F(b_0))
$$
for $i \not = 0$ and the composite induced map 
$$
\bfE(F(b_0)) \to \bfE(c_0)
$$
for $i = 0$.  This provides the desired homotopy.

\

\end{proof}

\section{Consequences of Quinn's Theorems} \label{Quinn}

In this section we prove the corollaries that we mentioned in the first section, namely Corollaries \ref{laurent}, \ref{kregular}, and \ref{dv}.  The first two corollaries depend on the recent proof of the isomorphism conjecture in $K$-theory for $G= \Z^n$:

\begin{theorem}[Quinn \cite{Q1}, 2.4.1 and 3.3.1] \label{Quinn1} 
$H_*^{\Z^n}(E_{\all},E_{\vc}; \bfK) = 0$.
\end{theorem}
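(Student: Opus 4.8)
The assertion is the $K$-theoretic Farrell--Jones isomorphism conjecture for $G=\Z^n$; equivalently, the assembly map
$$
H^{\Z^n}_*(E_{\vc}\Z^n;\bfK)\longrightarrow H^{\Z^n}_*(E_{\all}\Z^n;\bfK)=K_*(R[\Z^n])
$$
is an isomorphism. A naive induction on $n$ via the Bass--Heller--Swan splitting of $K_*(R[\Z^{n-1}][t,t^{-1}])$ does not suffice: it merely reduces the problem to identifying the iterated Nil groups $N^nK_*(R)$, which is essentially the content of the theorem. Instead, since $\Z^n$ acts freely, properly, and cocompactly by translations on $\R^n$, the plan is to argue by controlled algebra over $\R^n$. (For a commutative ring $R$ containing $\Q$ there is an alternative route through cyclic homology and algebraic geometry as in \cite{CHW}, which does not apply in general.)

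First I would fix a nonconnective bounded $K$-theory spectrum $\bfK^{-\infty}(\cC_Y(R))$ attached to a metric space $Y$ in the manner of Pedersen--Weibel, together with its $\Z^n$-equivariant refinement. The crucial input from controlled algebra is a descent isomorphism identifying $K_*(R[\Z^n])$ with the $\Z^n$-equivariant bounded $K$-theory of $\R^n$, and identifying the domain $H^{\Z^n}_*(E_{\vc};\bfK)$ of the assembly map with the corresponding bounded $K$-theory that is in addition ``locally finite'' in the virtually cyclic directions. Since the relevant degrees include the negative ones, one must work nonconnectively throughout, as in Bartels--L\"uck \cite{BL}.

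Next I would pass to the radial compactification $\R^n\subset D^n$ with boundary sphere $S^{n-1}$ and analyze the cofibration sequence relating the bounded $K$-theory of $\R^n$, the continuously controlled $K$-theory of $D^n$ relative to $S^{n-1}$, and the ``germs at infinity.'' The sphere $S^{n-1}$ carries a $\Z^n$-invariant stratification whose structure is governed by the set of rational directions: a ray through a primitive lattice vector is exactly the datum of a maximal cyclic subgroup $C\in\cM$. Iterating the germ-at-infinity analysis along this stratification and invoking the squeezing lemma (an Eilenberg swindle for sufficiently controlled categories) should identify the relative assembly map $H^{\Z^n}_*(E_{\vc};\bfK)\to K_*(R[\Z^n])$ with the bounded-to-continuously-controlled comparison map, and show it to be an isomorphism; as a by-product this recovers the splitting of Theorem~\ref{Zn}, the Nil contributions being precisely the germ terms supported at the rational points of $S^{n-1}$.

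The main obstacle is this last step: proving that the continuously controlled $K$-theory of $D^n$ relative to $S^{n-1}$ agrees with the equivariant homology $H^{\Z^n}_*(E_{\vc}\Z^n;\bfK)$. This demands a careful matching of the stratification of the boundary sphere by rational directions with the family of virtually cyclic subgroups of $\Z^n$, the running of the attendant Mayer--Vietoris and spectral-sequence arguments uniformly in the degree, and control of the squeezing estimates at each stage of the stratification --- which is the technical heart of the controlled-topology method.
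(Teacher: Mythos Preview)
The paper does not prove this theorem. It is quoted as a black box from Quinn's preprint \cite{Q1} (the reference in the statement is to sections 2.4.1 and 3.3.1 of that paper), and the surrounding text makes this explicit: ``The first two corollaries depend on the recent proof of the isomorphism conjecture in $K$-theory for $G=\Z^n$.'' There is therefore nothing in the paper to compare your proposal against.

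As to the proposal itself: you correctly identify that this is the Farrell--Jones conjecture for $\Z^n$ and that a controlled-algebra argument over $\R^n$ is the natural line of attack; this is indeed the spirit of Quinn's approach. But what you have written is an outline, not a proof. The substantive content is all deferred to the final paragraph, where you yourself flag the ``main obstacle'': matching the continuously controlled $K$-theory of $(D^n,S^{n-1})$ with $H^{\Z^n}_*(E_{\vc};\bfK)$ via the stratification of $S^{n-1}$ by rational directions, together with the squeezing estimates. That matching \emph{is} the theorem, and nothing in the sketch indicates how to carry it out. In particular, the passage from bounded to continuously controlled $K$-theory and the identification of germ terms at rational points with the Nil contributions indexed by $\cM$ requires precisely the kind of hyperelementary induction and controlled assembly machinery that Quinn develops; absent those details, the proposal is a description of what a proof should accomplish rather than a proof.
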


\begin{proof}[Proof of Corollary \ref{laurent}]

Note that for the group $G = \Z^n$,
$$
E_1 = E_{\fin} = E\Z^n;
$$
a useful model is $\R^n$ with $\Z^n$ acting by translations.  We show below that exact sequence of the pair
$$
0 \to H^{\Z^n}_q(E_{1}; \bfK) \to H^{\Z^n}_q(E_{\all}; \bfK) \to H^{\Z^n}_q(E_{\all},E_{1}; \bfK) \to 0
$$
is short exact and split. Assuming this for now, we see
\begin{align*}
K_qR[\Z^n] &= H_q^{\Z^n}(E_{\all}; \bfK) \\
& \cong H_q^{\Z^n}(E_1; \bfK) \oplus H^{\Z^n}_q(E_{\all},E_{1}; \bfK) \\
& = H_q(B\Z^n; \bfK(R)) \oplus H^{\Z^n}_q(E_{\vc},E_{1}; \bfK) \\
& \cong \bigoplus_i \binom{n}{i} K_{q-i}R  \oplus \bigoplus_i 2\binom{n-1}{i} NK_{q-i}R  \quad \text{ using Theorem \ref{Zn}}
\end{align*}

Thus we just need to show that the horizontal maps
$$
\begin{diagram}
  \node{H^{\Z^n}_q(E_{1}; \bfK)}  \arrow{s,r}{\cong} \arrow{e} \node{H^{\Z^n}_q(E_{\all}; \bfK)}  \arrow{s,r}{\cong} \\
  \node{\bigoplus_i \binom{n}{i} K_{q-i}R} \arrow{e} \node{K_qR[\Z^n]}
\end{diagram}
$$
are split injective.  We momentarily write $\bfK_R$ instead of $\bfK$\linebreak (so $\pi_q \bfK_R(G/H) = K_qRH$).  We prove the splitting exists by working inductively on $n$, with the inductive step given by the composite map
\begin{align*}
H_q^{\Z^n}(E_{\all}; \bfK_{R[\Z^{n}]}) & = K_qR[\Z^n] \\
& = H_q^{\Z}(E_{\all}; \bfK_{R[\Z^{n-1}]})\\
& \to H_q^{\Z}(E_{1}; \bfK_{R[\Z^{n-1}]}) = K_qR[\Z^{n-1}] \oplus K_{q-1}R[\Z^{n-1}] \\
& = H_q^{\Z^{n-1}}(E_{\all}; \bfK_{R[\Z^{n-1}]}) \oplus H_{q-1}^{\Z^{n-1}}(E_{\all}; \bfK_{R[\Z^{n-1}]}),
\end{align*}
where the map ``$\to$'' is given applying Lemma \ref{dlnil}(1).
\end{proof}

There is another decomposition of $K_qR[\Z^n]$ given by the fundamental theorem of $K$-theory.  We will review this to compare and contrast with our decomposition and to set the stage for the next corollary.
The fundamental theorem of algebraic $K$-theory states that 
$$
K_qR[t,t^{-1}] \cong K_q R \oplus NK_qR \oplus NK_qR \oplus K_{q-1}R.
$$
More precisely, for any functor $K : \rings \to \ab$, Bass \cite[Chapter XII, Section 7]{B} defines two functors 
\begin{align*}
NK(R) &= \ker(K(R[t]) \to K(R))\\
LK(R) & = \text{cok}(K(R[t]) \oplus K(R[t^{-1}]) \to K(R[t,t^{-1}])) 
\end{align*}
Bass calls $K$  a {\em contracted functor} if the four-term sequence
$$
0 \to K(R) \to K(R[t]) \oplus K(R[t^{-1}]) \to K(R[t,t^{-1}]) \to LK(R) \to 0
$$
is exact and the surjection with target $LK(R)$ is split, naturally in $R$.   He noted that $NLK(R) \cong LNK(R)$ and that if $K$ is contracted, then so are $NK$ and $LN$.  The more precise version of the fundamental theorem of $K$-theory is that $K_q$ is a contracted functor ($q \in \Z$) and there is a natural identification $K_{q-1} = LK_q$, which in fact is taken as the definition of $K_q$ for $q$ negative.

This allows a very cute formulation of the calculation of the $K$-theory of (Laurent) polynomial rings.

\begin{align}
\label{ftiso} K_q(R[t_1, t_1^{-1}, \dots t_n, t_n^{-1}]) & \cong (I + 2N + L)^n K_q(R) \\
\label{ftpoly} K_q(R[t_1, \dots t_n]) & \cong (I + N)^n K_q(R)
\end{align}

\begin{proof}[Proof of Corollary \ref{kregular}]
First one needs to identify the change of space map
$$
H^{\Z^n}_q(E_1;\bfK) \to H^{\Z^n}_q(E_{\all};\bfK)
$$
with the split injection from the fundamental theorem of $K$-theory
$$
\bigoplus_i \binom{n}{i} K_{q-i}R = (I + L)^n K_qR \to K_qR[\Z^n].
$$
This follows from the case where $n=1$ which can be done directly (using that the map $K_{q-1}R \to K_qR[t,t^{-1}]$) is given by the product with $t \in K_1\Z[t,t^{-1}]$ or by consulting \cite[section 4 and 8]{HP}. 

Thus the isomorphism 
$$
\bigoplus_{n \geq i \geq 0} \binom{n}{i} K_{q-i}R~ \oplus  ~\bigoplus_{C \in \cM}  ~ \bigoplus_{n-1 \geq i \geq 0} 2\binom{n-1}{i} NK_{q-i}R \cong (I+2N+L)^nK_qR
$$
given by comparing the isomorphisms from Corollary \ref{laurent} and equation (\ref{ftiso}) restricts to the ``identity'' on the subgroups $\bigoplus_{n \geq i \geq 0} \binom{n}{i} K_{q-i}R \to (I+L)^nK_qR$.  This induces an isomorphism on the quotient
$$
 ~\bigoplus_{C \in \cM}  ~ \bigoplus_{n-1 \geq i \geq 0} 2\binom{n-1}{i} NK_{q-i}R \cong ((I+2N +L)^n - (I+L)^n)K_qR
$$
Thus if the left hand side vanishes $N^iK_qR = 0$ for $n \geq i \geq 1$.  Hence by equation (\ref{ftpoly}), $K_qR[t_1,\ldots t_n] = K_qR$ as desired.
\end{proof}

\begin{remark} \label{explicit}
Since $K_qR[\Z^n]$ is, in general, an infinitely generated abelian group, it is worth being more explicit about the isomorphism in Corollary \ref{laurent}.  To this end, let $M(n)$ be the set of  degree $n$ monomials in the {\em non-commuting} variables $I$ and $L$.  Let $M(n,i) \subset M(n)$ be those monomials with exactly $i$ $L$'s.  

For each maximal infinite cyclic subgroup $C$ of $\Z^n$, choose a basis $b_1, b_2, \ldots, b_n$ of $\Z^n$ with $b_1 \in C$, i.e. choose an automorphism $\beta_C : \Z^n \to \Z^n$ with $\beta_C(\Z \times \{(0,\ldots, 0)\}) = C$.  We define an internal direct sum decomposition
$$
K_qR[\Z^n] = \bigoplus_{f \in M(n)} fK_qR \oplus \bigoplus_{C \in \cM} \bigoplus_{f \in M(n-1)} \beta_{C*}(fN_+K_qR \oplus fN_-K_qR).
$$
We indicate the summands by an example:  
$$LILK_qR \subset K_qR[t_1,t_1^{-1},t_2,t_2^{-1},t_3,t_3^{-1}]$$ is given by
\begin{align*}
(\cdot t_3) \text{inc}_* (\cdot t_1)K_{q-2}R & \subset
(\cdot t_3) \text{inc}_* K_{q-1}R[t_1,t_1^{-1}]\\
& \subset (\cdot t_3) K_qR[t_1,t_1^{-1},t_2,t_2^{-1}]\\
& \subset K_qR[t_1,t_1^{-1},t_2,t_2^{-1},t_3,t_3^{-1}]
\end{align*}
The proof of Corollary \ref{laurent} demonstrates the internal direct sum decomposition.    This direct sum decomposition may be easier to parse if we write it as 
$$
K_qR[\Z^n] = (I+L)^n K_qR \oplus \bigoplus_{C \in \cM}\beta_{C*}((I+L)^{n-1}(N_+ + N_-) K_qR)
$$
The $\cM$-summands depend only on the choice of a splitting of the injection $C \to \Z^n$.  If $f \in M(n,i)$ there is a canonical isomorphism
$$
fK_qR \cong K_{q-i}R
$$
and for $f \in M(n-1,i)$,
$$
fN_{\pm}K_qR \cong N_{\pm}K_{q-i}R.
$$
\end{remark}

\begin{remark}
Let $\cM_{\not = 0}$ be the set of maximal cyclic subgroups of $\Z^n$ whose generators have all nonzero coordinates.  Let $\cM_+$ be the set of maximal cyclic subgroups of $\Z^n$ with a generator having  all positive coordinates.  

The proof of Corollary \ref{kregular} shows that
$$
(N_+ + N_-)^n K_qR = \bigoplus_{C \in \cM_{\not = 0}}\beta_{C*} (I+L)^{n-1}(N_+ + N_-) K_qR.
$$
This implies 
$$
2^nN^nK_qR \cong  \bigoplus_{C \in \cM_{\not = 0}} \bigoplus_i 2\binom{n-1}{i}NK_{q-i}R.
$$
\end{remark}

\begin{corollary}  \label{countable}
Suppose $NK_qR, NK_{q-1}R, \dots , NK_{q-n+1}R$ are all countable torsion groups.  Then 
$$
N^nK_qR \cong \bigoplus_{C \in \cM_+} \bigoplus_i \binom{n-1}{i} NK_{q-i}R.
$$
\end{corollary}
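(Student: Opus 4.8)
The plan is to bootstrap from the isomorphism of the remark preceding this corollary,
$$
2^nN^nK_qR \;\cong\; \bigoplus_{C \in \cM_{\not = 0}} \bigoplus_{n-1\ge i\ge 0} 2\binom{n-1}{i}NK_{q-i}R,
$$
by first rewriting its right-hand side as a $2^n$-th direct power of the conjectured answer, and then cancelling that power using the classification of countable torsion abelian groups. The countability and torsion hypotheses are present precisely in order to make the last step available.

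For the first step I would use the action of the sign-change group $(\Z/2)^n$ on $\Z^n$ that negates coordinates. It permutes the set $\cM_{\not=0}$ of maximal cyclic subgroups all of whose coordinates are nonzero, and it carries $\cM_+$ into $\cM_{\not=0}$. If $C\in\cM_{\not=0}$ has generator $v=(v_1,\dots,v_n)$ then, because every $v_i\neq 0$, the only sign changes fixing $C$ are $\pm(1,\dots,1)$; hence every orbit has exactly $2^{n-1}$ elements, and each orbit meets $\cM_+$ in the single subgroup generated by $(|v_1|,\dots,|v_n|)$. Choosing a set-theoretic section of the resulting surjection $\cM_{\not=0}\to\cM_+$ gives a bijection $\cM_{\not=0}\cong\cM_+\times S$ with $|S|=2^{n-1}$, so the displayed isomorphism becomes
$$
2^nN^nK_qR \;\cong\; \bigoplus_{C\in\cM_+}\bigoplus_{n-1\ge i\ge 0} 2^n\binom{n-1}{i}NK_{q-i}R \;\cong\; B^{\oplus 2^n},
$$
where $B=\bigoplus_{C\in\cM_+}\bigoplus_{n-1\ge i\ge 0}\binom{n-1}{i}NK_{q-i}R$ is the conjectured right-hand side. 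In particular $(N^nK_qR)^{\oplus 2^n}\cong B^{\oplus 2^n}$.

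It remains to cancel the exponent $2^n$. The set of maximal cyclic subgroups of $\Z^n$ is countable, and by hypothesis each $NK_{q-i}R$ with $0\le i\le n-1$ is countable torsion, so $B$ is a countable direct sum of countable torsion abelian groups and hence is itself countable torsion; and $N^nK_qR$ is then countable torsion as well, since it is isomorphic to a direct summand of the countable torsion group $B^{\oplus 2^n}$. For such groups I would invoke Ulm's theorem: a countable torsion abelian group is determined up to isomorphism by its primary decomposition together with, for each primary component, the cardinal invariants of the divisible part and the Ulm invariants of the reduced part; and each of these invariants of $A^{\oplus m}$ is $m$ times (in cardinal arithmetic) the corresponding invariant of $A$. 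Since $m\kappa=m\lambda$ implies $\kappa=\lambda$ for a nonzero finite $m$ and cardinals $\kappa,\lambda$, the isomorphism $(N^nK_qR)^{\oplus 2^n}\cong B^{\oplus 2^n}$ forces $N^nK_qR\cong B$, as required.

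The one genuinely substantive point is this final cancellation: the countable-torsion hypothesis is exactly the condition under which Ulm's theorem applies, and dropping it leaves the present method able to prove the desired formula only after passing to $2^n$-th direct powers — which is why the unrestricted version remains the stated conjecture. The remaining ingredients (the orbit count for the sign action, and the observation that both sides are countable torsion) are routine.
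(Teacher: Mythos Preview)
Your proof is correct and follows essentially the same approach as the paper: both use the $2^{n-1}$-to-$1$ correspondence $\cM_{\not=0}\to\cM_+$ (which you phrase via the sign-change action) to rewrite the displayed isomorphism as $(N^nK_qR)^{\oplus 2^n}\cong B^{\oplus 2^n}$, and then invoke Ulm's theorem to cancel the exponent. You are in fact slightly more careful than the paper in verifying that $N^nK_qR$ itself is countable torsion before applying Ulm, which is a point the paper leaves implicit.
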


\begin{proof}
There is a $2^{n-1}$-to-$1$ map $\cM_{\not = 0} \to \cM_+$ given by choosing a generator $(x_1, \dots, x_n)$ for $C$ and sending
$$
C = \langle (x_1, \dots, x_n) \rangle \in \cM_{\not = 0} \mapsto  \langle (|x_1|, \dots, |x_n|) \rangle \in \cM_{+}
$$ 
Thus
$$
2^nN^nK_qR \cong 2^n \bigoplus_{C \in \cM_+} \bigoplus_i \binom{n-1}{i}NK_{q-i}R.
$$
If $A$ and $B$ are countable torsion abelian groups, then 
$$
A \oplus A \cong B \oplus B \Rightarrow A \cong B
$$
by Ulm's Theorem (see \cite[Exercise 32]{K}).  The corollary follows.
\end{proof}

\begin{remark} 
If $R$ is a countable ring so that $R \otimes \Q$ is regular noetherian, then $NK_*R$ is countable torsion.  The fact that they are countable follows from the fact that the homology of $BGL(R[\Z^n])$ is countable.  The fact that they are torsion comes from the fact that $BGL(R[\Z^n]) \to BGL((R\otimes \Q)[\Z^n])$ is a rational equivalence, and that $NK_*(R \otimes \Q) = 0$.  This remark is due to Chuck Weibel.

A key example is $R = \Z G$ for $G$ a finite group.  
\end{remark}

\begin{remark}
Tom Farrell told the author some years ago that he and Lowell Jones knew that the isomorphism conjecture for $\Z^n$ has striking consequences for $N^iK$ and presumably anticipated Corollary \ref{laurent}.  Nonetheless, now that the isomorphism conjecture has been proved for $\Z^n$, it is worthwhile to document its consequences.  

 Corti\~nas, Haesemeyer, and  Weibel \cite{CHW} use radically different techniques to analyze $N^iKR$.  Their results are both  stronger, since they have a more complete computation using Hochschild homology and weaker, because their results apply to a restricted class of rings.  It would be interesting to compare the two approaches.
\end{remark}

In \cite{Q2}, Frank Quinn uses controlled topology to  prove the following theorem.  Holger Reich \cite{R} gave a short proof of the theorem using a theorem from \cite{BLR}.

\begin{theorem}[Quinn] \label{Quinn2} Let $\g \to D_{\infty}$ be an epimorphism of groups.  Then
$$
H^\g_*(E_{\all},E_{\phi^*\cfin}; \bfK) = 0.
$$
\end{theorem}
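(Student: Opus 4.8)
The plan is to reduce Theorem \ref{Quinn2} to the comparison of Nil-groups in Corollary \ref{dv}, using the relative-homology computations already carried out in this paper, and then to supply that comparison from the literature. First I would run the long exact sequence of the triple $E_{\varphi^*\fin}\g \subset E_{\varphi^*\cfin}\g \subset E_{\all}\g$, whose middle relative term is the group $H^\g_q(E_{\all}\g, E_{\varphi^*\cfin}\g;\bfK)$ we wish to kill; then I would pin down the two flanking relative terms by means of Theorem \ref{infinite_dihedral} and Lemma \ref{dlnil}.

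Concretely, write $\g = G_0 *_F G_1$ for the amalgamated product decomposition with $F = \ker\varphi$ of index $2$ in $G_0$ and in $G_1$. The finite subgroups of $D_\infty$ are the trivial group and the order-two subgroups; their $\varphi$-preimages are $F$ together with the conjugates of $G_0$ and $G_1$; hence $\varphi^*\fin$ coincides with the smallest family of subgroups of $\g$ containing $G_0$ and $G_1$. Therefore the $\g$-part of Lemma \ref{dlnil}(2) gives
$$
H^\g_q(E_{\all}\g, E_{\varphi^*\fin}\g; \bfK) \;\cong\; NK_q(RF; \widehat{RG_0}, \widehat{RG_1}),
$$
the Waldhausen Nil-group, and Theorem \ref{infinite_dihedral} itself reads
$$
H^\g_q(E_{\varphi^*\cfin}\g, E_{\varphi^*\fin}\g; \bfK) \;\cong\; NK_q(RF,\alpha),
$$
the Farrell Nil-group. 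In the long exact sequence of the triple, $H^\g_q(E_{\all}\g, E_{\varphi^*\cfin}\g;\bfK)$ sits between these two, so an elementary diagram chase shows that it vanishes in every degree if and only if the map $NK_q(RF,\alpha) \to NK_q(RF; \widehat{RG_0}, \widehat{RG_1})$ induced by the inclusion of families is an isomorphism for all $q$; under the above identifications this map is the natural comparison of the Farrell Nil-group with the Waldhausen Nil-group, and that it is a bijection is precisely Corollary \ref{dv} (and conversely this is exactly how Corollary \ref{dv} is deduced from the theorem in the text).

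The main obstacle is this comparison, and I do not expect a soft proof of it: a Waldhausen Nil-object for the amalgam consists of a pair of projective $RF$-modules joined by maps through both bimodules $\widehat{RG_0}$ and $\widehat{RG_1}$, which is a priori richer data than a single nilpotent $\alpha$-twisted endomorphism --- though, since $\widehat{RG_0}\otimes_{RF}\widehat{RG_1}$ is the $\alpha$-twisted $RF$-bimodule, such an object does determine a Farrell Nil-object on one of its two modules, which is what produces the map above. Showing this map induces an equivalence on $K$-theory is where real input is needed: the controlled $K$-theory of \cite{Q2}, the purely algebraic identification of \cite{DKR}, or Reich's short deduction \cite{R} from \cite{BLR}. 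The transitivity principle does not avoid this: applied to the pair of families $\varphi^*\cfin \subset \all$, it merely reduces Theorem \ref{Quinn2} for $\g$ to the same statement for those subgroups $H \le \g$ with $\varphi(H)$ infinite dihedral, re-expressing the theorem for smaller groups rather than resolving it.
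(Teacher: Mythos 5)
This theorem is not proved in the paper at all: it is imported as a result of Quinn, established by controlled topology in \cite{Q2}, with an alternative proof by Reich \cite{R} using \cite{BLR}; in the paper's logic Corollary \ref{dv} is a \emph{consequence} of Theorem \ref{Quinn2} (combined with Theorem \ref{infinite_dihedral} and Lemma \ref{dlnil}(2)), not an ingredient of it. Your proposal runs that deduction backwards, and as written it is circular: you appeal to Corollary \ref{dv}, which in this paper is obtained from the very vanishing statement you are trying to prove. Of the three external inputs you offer, \cite{Q2} and \cite{R} are simply proofs of Theorem \ref{Quinn2} itself, so invoking them is citing the result rather than proving it; the only potentially non-circular input is the algebraic isomorphism of \cite{DKR}. (The parts of your reduction that are sound: $\varphi^*\fin$ is indeed the smallest family containing $G_0$ and $G_1$, so Lemma \ref{dlnil}(2) identifies $H^\g_q(E_{\all},E_{\varphi^*\fin};\bfK)$ with the Waldhausen Nil group, and the long exact sequence of the triple does show that $H^\g_*(E_{\all},E_{\varphi^*\cfin};\bfK)=0$ if and only if the inclusion-induced map $H^\g_q(E_{\varphi^*\cfin},E_{\varphi^*\fin};\bfK)\to H^\g_q(E_{\all},E_{\varphi^*\fin};\bfK)$ is an isomorphism for all $q$.)

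Even granting \cite{DKR}, there remains a genuine gap. Corollary \ref{dv}, and likewise the theorem of \cite{DKR}, asserts an \emph{abstract} isomorphism $NK_q(RF;\widehat{RG_0},\widehat{RG_1})\cong NK_q(RF,\alpha)$; an abstract isomorphism of groups does not imply that the particular map appearing in the triple sequence is bijective. To close the argument you must show that, under the identifications of Theorem \ref{infinite_dihedral} and of Lemma \ref{dlnil}(2) (each defined only via chosen models and splittings), the inclusion-of-families map corresponds to the explicit comparison of \cite{DKR} (the functor built from composing the two bimodule maps of a Waldhausen Nil object), or at least to some isomorphism. You assert this identification in one sentence but do not prove it, and it is precisely where the real content lies; this is why the paper takes the vanishing as a black box from Quinn's controlled topology (or Reich's argument via \cite{BLR}) and derives the Nil comparison from it, rather than the other way around. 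So the proposal is a correct reformulation of the theorem as a statement about a specific Farrell--Waldhausen Nil comparison map, but it is not a proof.
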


\begin{proof}[Proof of Corollary \ref{dv}]
\begin{align*}
NK_q(RF;\widehat{RG_0},\widehat{RG_1}) & = H^\g(E_{\all},E_{\varphi^*\fin};\bfK) \\
&= H^\g(E_{\varphi^*\cfin},E_{\varphi^*\fin};\bfK) \\
& = NK_q(RF,\alpha)
\end{align*}
The first equality holds by Lemma \ref{dlnil}(2), the second by Quinn's theorem, and the third by Theorem \ref{infinite_dihedral}.
\end{proof}

\begin{remark}
Several years ago the author and Bogdan Vajiac outlined a unpublished proof of Corollary \ref{dv}  for $q \leq 1$, using controlled topology.  The original motivation for this note was to  check that Quinn's Theorem \ref{Quinn2} is consistent with the work of Davis-Vajiac.  Previous partial results are contained in the papers \cite{FJ95}, \cite{JP}, \cite{CP}, \cite{LO}.  A proof of Corollary \ref{dv} avoiding controlled topology is given in \cite{DKR}.
\end{remark}

 This paper benefited from conversations with Tom Farrell, Christian Haesemeyer, Mike Mandell, Frank Quinn, and Chuck Weibel.

\end{document}